\providecommand{\tabularnewline}{\\}
\theoremstyle{plain}
\newtheorem{thm}{\protect\theoremname}
\def\ps@pprintTitle{%
 \let\@oddhead\@empty
 \let\@evenhead\@empty
 \def\@oddfoot{}%
 \let\@evenfoot\@oddfoot}
\providecommand{\theoremname}{Theorem}
\begin{document}

\begin{frontmatter}{}

\title{Orthogonal polynomials and Möbius transformations}

\author{R. S. Vieira}

\ead{rs.vieira@unesp.br}

\author{V. Botta}

\ead{vanessa.botta@unesp.br}

\address{Universidade Estadual Paulista -- UNESP, Faculdade de Ciências e
Tecnologia, Departamento de Matemática e Computação, CEP. 19060-900,
Presidente Prudente, SP, Brasil.}
\begin{abstract}
Given an orthogonal polynomial sequence on the real line, another
sequence of polynomials can be found by composing these polynomials
with a general Möbius transformation. In this work, we study the properties
of such Möbius-transformed polynomials. We show that they satisfy
an orthogonality relation in given curve of the complex plane with
respect to a varying weight function and that they also enjoy several
properties common to the orthogonal polynomial sequences on the real
line --- e.g. a three-term recurrence relation, Christoffel-Darboux
type identities, their zeros are simple, lie on the support of orthogonality
and have the interlacing property, etc. Moreover, we also show that
the Möbius-transformed polynomials obtained from classical orthogonal
polynomials also satisfy a second-order differential equation, a Rodrigues'
type formula and generating functions. As an application, we show
that Hermite, Laguerre, Jacobi, Bessel and Romanovski polynomials
are all related to each other by a suitable Möbius transformation.
New orthogonality relations for Bessel and Romanovski polynomials
are also presented. 
\end{abstract}
\begin{keyword}
Orthogonal polynomials, Möbius transformations, varying weight functions,
classical orthogonal polynomials, Bessel polynomials, Romanovski polynomials.

\MSC[2010]{42C05, 33C47, 30C35}

\tableofcontents{}
\end{keyword}

\end{frontmatter}{}

\section{Introduction}

Since its introduction in mathematical sciences, the concept of orthogonal
polynomials has been extended and generalized in many ways. The first
concept of polynomial orthogonality referred to sequences of polynomials
orthogonal with respect to a real, non-negative and continuous weight
function (the classical orthogonal polynomials of Hermite, Laguerre
and Jacobi are the most known examples). This concept was soon after
generalized in order to cover the cases of discrete measures (the
Charlier polynomials being the prototype in this class). The range
of known families of orthogonal polynomials increased exponentially
after Askey showed that hypergeometric functions lead to several families
of orthogonal polynomials; together with their $q$-analogues, a concept
that was introduced by Hahn, this resulted in \emph{Askey scheme}
for the classification of hypergeometric orthogonal polynomials \citep{KoekoekEtal2010}.
All those polynomials are orthogonal on the real line; of course,
an obvious generalization is to consider polynomials that are orthogonal
in other curves of the complex plane. This was already taken into
account, the most known example being the orthogonal polynomial sequences
on the unit circle \citep{Simon2005A,Simon2005B}. Since then, many
other concepts of polynomial orthogonality were defined: we can cite,
for instance, quasi-orthogonal polynomials \citep{Chihara1957}, polynomials
that are orthogonal with respect to a varying measure \citep{Totik1998},
exceptional orthogonal polynomials \citep{Gomez2009,Gomez2010}, para-orthogonal
polynomials \citep{Jones1989}, among others. 

In this work, we present another generalized class of orthogonal polynomials
which we call \emph{Möbius-transformed orthogonal polynomials}. As
the name says, these sequences of polynomials are generated by the
action of a general Möbius transformation over an orthogonal polynomial
sequence on the real line. In  what follows, we shall show that such
sequences of polynomials are orthogonal in a given curve of the complex
plane, with respect to a simple varying weight function. Besides,
these Möbius-transformed orthogonal polynomials enjoy many properties
of the usual orthogonal polynomials on the real line as, for example,
their zeros are all simple, lie on the support of orthogonality and
have the interlacing property; they also satisfy a three-term recurrence
relation and identities similar to those of Christoffel-Darboux can
be derived. In particular, the Möbius-transformed polynomials obtained
from a classical orthogonal polynomial sequence also satisfy second-order
differential equations, their weight function also satisfy a first-order
differential equation of Pearson's type, there also exist Rodrigues'
type formula for them and they can also be found through simple generating
functions.

We highlight that these Möbius-transformed orthogonal polynomials
can be of importance in Applied Mathematics and Physics. In fact,
the composition of polynomials with a Möbius transformation is already
considered in many applications: we can cite, for example, its role
in some of the fastest polynomial isolating and root-finding methods
available to date (e.g., Akritas method \citep{Akritas1989}). We
can also find the number of zeros that a complex polynomial has on
the unit circle through Sturm theorem if we previously consider the
composition of the testing polynomial with the following pair of Möbius
transformations (often called Cayley transformations), 
\begin{equation}
M(x)=-i\frac{x+1}{x-1},\qquad W(x)=\frac{x-i}{x+i},\label{Cayley}
\end{equation}
that map, respectively, the real line onto the unit circle and vice
versa \citep{Vieira2019}. Moreover, if we act with the first of these
special Möbius transformations over an orthogonal polynomial sequence
on the real line, then we can verify (from the results that will be
reported in this work) that a new sequence of polynomials whose zeros
lie all on the unit circle is obtained. Consequently, we shall get
an infinite sequence of \emph{self-inversive} or \emph{self-reciprocal}
polynomials (which are complex or real polynomials whose zeros are
all symmetric with respect to the unit circle). It is worth to mention
that self-inversive and self-reciprocal polynomials are important
in many fields of Physics and Mathematics --- see \citep{Vieira2019B}
and references therein. Finally, such Möbius-transformed polynomials
are also orthogonal on the unit circle with respect to a varying measure;
because of this, we can conclude that they are intimately related
with the usual orthogonal polynomials on the unit circle and also
with para-orthogonal polynomials. The study of this special case will
be reported a separated paper due to particular importance (in this
work we shall consider a general Möbius transformation). 

Another field where Möbius-transformed polynomials can be of relevance
is related to boundary value problems and second-order differential
equations, which are usually found in physical problems. It is well
known the role of classical orthogonal polynomials in Sturm-Liouville
problems on the real line; because Möbius transformations preserve
the main characteristics of the polynomials, we can conclude from
this that the corresponding Möbius-transformed polynomials will be
associated with boundary values problems defined in some curves (arc
circles or line segments) of the complex plane and that they will
also obey second-order differential equations with complex coefficients.
We can cite the so-called \emph{relativistic classical orthogonal
polynomials}, which are orthogonal polynomials on the real line with
respect to a varying measure that appear in some problems of relativistic
quantum mechanics \citep{Aldaya1991,Natalini1996,HeNatalini1999}.
These relativistic classical orthogonal polynomials are actually special
cases of Möbius-transformed orthogonal polynomials --- in fact, it
was showed by Ismail that they can be obtained from Jacobi polynomials
through the action of a specific Möbius transformation with real parameters
\citep{Ismail1996}.

This work is organized as follows: the concept of Möbius-transformed
polynomials is introduced in Section \ref{Section Mobius}. In Section
\ref{MTOP}, we begin our study on Möbius-transformed orthogonal polynomials,
where their orthogonality, the three-term recurrence relations and
the corresponding Christoffel-Darboux type identities are discussed.
The classical Möbius-transformed polynomials are studied in Section
\ref{Section CMTOP}, where we show that they also satisfy second-order
differential equations, that their weight functions satisfy a first-order
Pearson's type differential equation, we derive Rodrigues' like formulas
for them and the respective generating functions. Finally, in Section
\ref{Section Applications} we show, as an application, that all the
sequences of Hermite, Laguerre, Jacobi, Bessel and Romanovski polynomials
are related to each other essentially by a Möbius transformation;
this also provided new orthogonality relations for Bessel and Romanovski
polynomials. 

\section{Möbius-transformed polynomials\label{Section Mobius}}

A Möbius transformation is a conformal mapping defined of the extended
complex plane $\mathbb{C}_{\infty}=\mathbb{C}\cup\{\infty\}$ (i.e.,
the Riemann sphere) by the formula \citep{Flanigan1972,Ahlfors1979}:
\begin{equation}
M(x)=\frac{ax+b}{cx+d},\label{Mx}
\end{equation}
where $x\in\mathbb{C}_{\infty}$ and $a$, $b$, $c$ and $d$ are
complex numbers such that $\varDelta=ad-bc\neq0$. Its inverse, which
is also a Möbius transformation, is given by the expression, 
\begin{equation}
W(x)=-\frac{dx-b}{cx-a}.\label{Wx}
\end{equation}
Together with the relations 
\begin{align}
M(-d/c) & =\infty, & M(\infty) & =a/c, & W(a/c) & =\infty, & W(\infty) & =-d/c,
\end{align}
transformations (\ref{Mx}) and (\ref{Wx}) become continuous on $\mathbb{C}_{\infty}$.
The Möbius transformations include as special cases many important
geometric transformations as, for instance, translations $M(x)=x+b$,
rotations $M(x)=ax$ $(|a|=1)$, inversions $M(x)=1/x$ and linear
transformations $M(x)=ax+b$. They also have many interesting properties
which can be found in \citep{Flanigan1972,Ahlfors1979}.

Given a complex polynomial $p(x)=p_{0}+p_{1}x+\cdots+p_{n-1}x^{n-1}+p_{n}x^{n}$
of degree $n$, we can compose $p(x)$ with $M(x)$ so that rational
function 
\begin{equation}
r(x)=p_{0}+p_{1}\left(\frac{ax+b}{cx+d}\right)+\cdots+p_{n-1}\left(\frac{ax+b}{cx+d}\right)^{n-1}+p_{n}\left(\frac{ax+b}{cx+d}\right)^{n},\label{r}
\end{equation}
is is obtained as the result. Thus, multiplying $r(x)$ by $\left(cx+d\right)^{n}$,
a new polynomial is obtained: 
\begin{multline}
q(x)=\left(cx+d\right)^{n}r(x)=\left(cx+d\right)^{n}p(M(x))\\
=p_{0}\left(cx+d\right)^{n}+p_{1}\left(ax+b\right)\left(cx+d\right)^{n-1}+\cdots+p_{n-1}\left(ax+b\right)^{n-1}\left(cx+d\right)+p_{n}\left(ax+b\right)^{n}.\label{q}
\end{multline}
We shall refer to $r(x)$ as a \emph{Möbius-transformed rational function}
and to $q(z)$ as a \emph{Möbius-transformed polynomial}. Notice that
the original polynomial $p(x)$ can be retrieved from $r(x)$ and
$q(x)$ by the formula
\begin{equation}
p(x)=r(W(x))=\frac{q(W(x))}{\left(cW(x)+d\right)^{n}}=\frac{\left(-1\right)^{n}}{\varDelta^{n}}\left(cx-a\right)^{n}q(W(x)).\label{P}
\end{equation}

Not always the Möbius-transformed polynomial $q(x)$, as defined in
(\ref{q}), has the same degree as the original polynomial $p(x)$.
The precise condition for this to be true is the following:
\begin{thm}
\label{PropNdegree}Let $p(x)=p_{n}x^{n}+\cdots+p_{0}$ be a complex
polynomial of degree $n$. Then, the Möbius-transformed polynomial
$q(x)$ will be polynomial of degree $n$ if, and only if, $c=0$
or, either, if $c\neq0$ and $p(a/c)\neq0$.
\end{thm}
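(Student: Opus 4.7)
The plan is to examine directly the coefficient of $x^{n}$ in the explicit expansion of $q(x)$ given by (\ref{q}) and identify when it is nonzero. Since each summand in
\[
q(x)=\sum_{k=0}^{n}p_{k}(ax+b)^{k}(cx+d)^{n-k}
\]
is a polynomial of degree at most $n$, we automatically have $\deg q\le n$, and the whole question is whether the coefficient of $x^{n}$ vanishes.

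First I would compute this leading coefficient term by term. The binomial theorem tells us that the coefficient of $x^{n}$ in $(ax+b)^{k}(cx+d)^{n-k}$ is simply $a^{k}c^{n-k}$, so the coefficient of $x^{n}$ in $q(x)$ equals
\[
L=\sum_{k=0}^{n}p_{k}a^{k}c^{n-k}.
\]
With this single formula in hand, the proof splits into the two cases of the theorem, and this is the conceptual core of the argument.

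Next I would handle the two cases. If $c=0$, then $\Delta=ad\neq 0$ forces $a\neq 0$, and the sum collapses to $L=p_{n}a^{n}\neq 0$, giving $\deg q=n$ unconditionally. If $c\neq 0$, I would factor out $c^{n}$ and recognize the remaining sum as an evaluation of $p$, namely
\[
L=c^{n}\sum_{k=0}^{n}p_{k}\left(\frac{a}{c}\right)^{k}=c^{n}\,p\!\left(\frac{a}{c}\right),
\]
so that $L\neq 0$ precisely when $p(a/c)\neq 0$. This establishes both the sufficiency and the necessity of the stated conditions, completing the proof.

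There is no real obstacle here; the only thing to be careful about is ensuring that the hypothesis $\Delta\neq 0$ is invoked in the $c=0$ branch to guarantee $a\neq 0$, so that the theorem has content in that case. Everything else is a direct binomial expansion.
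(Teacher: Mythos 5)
Your proof is correct and follows essentially the same route as the paper: both compute the coefficient of $x^{n}$ in the expansion of $q(x)$, recognize it as $c^{n}p(a/c)$ when $c\neq0$, and dispose of the $c=0$ case separately (you are in fact slightly more explicit than the paper in noting that $\varDelta=ad\neq0$ forces $a\neq0$ there). No issues.
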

\begin{proof}
If $c=0$ the proof is trivial; thereby, assume $c\neq0$. Expanding
the binomials in (\ref{Mx}) we find that the leading coefficient
$q_{n}$ of $q(x)$ is given by the expression, 
\begin{equation}
q_{n}=\sum_{k=0}^{n}p_{n-k}a^{n-k}c^{k}=c^{n}\sum_{k=0}^{n}p_{n-k}\left(\frac{a}{c}\right)^{n-k}=c^{n}\left[p_{n}\left(\frac{a}{c}\right)^{n}+\cdots+p_{0}\right]=c^{n}p\left(\frac{a}{c}\right),\label{qn}
\end{equation}
from which we conclude that $q(x)$ will be degree $n$ whenever $a/c$
is not a zero of $p(x)$. 
\end{proof}
We can also find how the zeros of the Möbius-transformed polynomial
$q(x)$ are related with the zeros of the original polynomial $p(x)$
through the following:
\begin{thm}
\label{PropRoots} Let $\xi_{1},\ldots,\xi_{n}$ denote the zeros
of a polynomial $p(x)$ of degree $n$ and $\zeta_{1},\ldots,\zeta_{n}$
the respective zeros of the Möbius-transformed polynomial $q(x)$.
Assuming that $p(a/c)\neq0$, we have that, 
\begin{equation}
\zeta_{1}=W\left(\xi_{1}\right),\ldots,\zeta_{n}=W(\xi_{n}).
\end{equation}
\end{thm}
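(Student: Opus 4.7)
The plan is to leverage the inversion formula (\ref{P}) that was already derived in the excerpt, which reads $p(x) = \frac{(-1)^n}{\Delta^n}(cx - a)^n q(W(x))$. If $\xi_i$ is any zero of $p$, then substituting $x = \xi_i$ makes the left-hand side vanish, leaving $0 = \frac{(-1)^n}{\Delta^n}(c\xi_i - a)^n q(W(\xi_i))$. The hypothesis $p(a/c) \neq 0$ guarantees that no zero $\xi_i$ of $p$ coincides with $a/c$, so $c\xi_i - a \neq 0$ and we conclude $q(W(\xi_i)) = 0$ for every $i$. Thus each of the $n$ points $W(\xi_1), \ldots, W(\xi_n)$ is indeed a zero of $q$.

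To finish, I would combine this with two facts stated earlier: first, $W$ is a bijection of $\mathbb{C}_\infty$, so the list $W(\xi_1), \ldots, W(\xi_n)$ inherits from $\xi_1,\ldots,\xi_n$ exactly the same pattern of distinctness and repetitions; second, Theorem \ref{PropNdegree} asserts that under the assumption $p(a/c) \neq 0$ the polynomial $q(x)$ has degree exactly $n$, hence possesses precisely $n$ zeros counted with multiplicity. Therefore the points $W(\xi_i)$ must exhaust all the zeros of $q$, which gives the stated correspondence $\zeta_i = W(\xi_i)$ up to relabelling.

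The only subtle point, and the one I expect to be the main (minor) obstacle, is the preservation of multiplicities. To settle this cleanly I would factor $p(x) = (x - \xi)^m \tilde p(x)$ with $\tilde p(\xi) \neq 0$ and substitute into (\ref{q}), obtaining $q(x) = (cx+d)^{n-m}\bigl[(a - \xi c)\,x + (b - \xi d)\bigr]^m \tilde p(M(x))$. The linear factor in brackets vanishes to order exactly one at $x = W(\xi)$; at that same point, $(cx+d)$ evaluates to $-\Delta/(c\xi - a) \neq 0$ (using $\Delta \neq 0$ and $\xi \neq a/c$), and $\tilde p(M(W(\xi))) = \tilde p(\xi) \neq 0$. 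Consequently $W(\xi)$ is a zero of $q$ of multiplicity exactly $m$, and the correspondence respects multiplicities as desired.
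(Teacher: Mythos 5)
Your first paragraph is exactly the paper's proof: substitute a zero $\xi_k$ into the inversion formula (\ref{P}), note that $(c\xi_k-a)^n\neq 0$ because $p(a/c)\neq 0$ forces $\xi_k\neq a/c$, and conclude $q(W(\xi_k))=0$. The paper stops there. Your second and third paragraphs supply two things the paper leaves implicit: that the points $W(\xi_1),\dots,W(\xi_n)$ \emph{exhaust} the zeros of $q$ (via the degree count from Theorem \ref{PropNdegree}), and that multiplicities are preserved (via the factorization $q(x)=\bigl[(a-\xi c)x+(b-\xi d)\bigr]^{m}\,(cx+d)^{n-m}\tilde p(M(x))$, where the linear factor vanishes to order one at $W(\xi)$ while $cW(\xi)+d=-\varDelta/(c\xi-a)\neq 0$ and $\tilde p(\xi)\neq 0$). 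Both computations check out, so your write-up is a correct and strictly more complete version of the same argument rather than a different route; the added care about multiplicity is genuinely needed for the theorem's claim that the $\zeta_i$ are ``the respective zeros'' of $q$, counted as a full list of $n$.
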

\begin{proof}
For any zero $\xi_{k}$ $\left(1\leqslant k\leqslant n\right)$ of
$P_{n}(x)$ we have from (\ref{P}) that: 
\begin{equation}
p\left(\xi_{k}\right)=\frac{\left(-1\right)^{n}}{\varDelta^{n}}\left(c\xi_{k}-a\right)^{n}q\left(W\left(\xi_{k}\right)\right)=0,\qquad1\leqslant k\leqslant n.
\end{equation}
This plainly shows us that $\zeta_{k}=W(\xi_{k})$ will be a zero
of $Q_{n}(x)$ whenever $\xi_{k}\neq a/c$.
\end{proof}
From this we can also conclude that the Möbius-transformed polynomial
$q(x)$ will be a polynomial of degree $n-m$ whenever the original
polynomial $p(x)$ of degree $n$ has a zero of multiplicity $m$
at the point $x=a/c$. This, of course, is due to the cancellation
of the $m$ poles generated by the mapping $W\left(a/c\right)=\infty$
with the corresponding $m$ zeros of the factors $\left(c\xi-a\right)$
generated by the action of the Möbius transformation over $p(z)$.

\section{Möbius-transformed orthogonal polynomials\label{MTOP}}

Now, let us consider an orthogonal polynomial sequence defined in
an given interval $(l,r)\subseteq\mathbb{R}$ with respect to a given
weight function $w(x)$. That is, let us consider a sequence $P=\left\{ P_{n}(x)\right\} _{n=0}^{\infty}$
in which each polynomial $P_{n}(x)\in P$ has degree $n$ and such
that the following \emph{orthogonality relation} is satisfied \citep{Szego1939,Chihara2011}:
\begin{equation}
\int_{l}^{r}P_{m}(x)P_{n}(x)w(x)\mathrm{d}x=K_{m}\delta_{m,n},\label{OCP}
\end{equation}
where $\delta_{m,n}$ is the Kronecker symbol, $K_{m}$ are positive
constants and the interval $(l,r)\in\mathbb{R}$ can be either finite,
semi-infinite or infinite\footnote{We remark that definition above can also be generalized to the case
where the weight function $w(x)$ is discrete by interpreting the
integral in the sense of Stieltjes or Lebesgue, although in this work
we shall only consider the case where $w(x)$ is continuous. }. 

Composing each polynomial $P_{n}(x)\in P$ with the Möbius transformation
(\ref{Mx}), we shall obtain a sequence $R=\left\{ R_{n}(x)\right\} _{n=0}^{\infty}$
of rational functions, where, 
\begin{equation}
R_{n}(x)=P_{n}\left(M(x)\right),\qquad n\geqslant0.\label{Rn}
\end{equation}
Multiplying each rational function $R_{n}(x)$ by $\left(cx+d\right)^{n}$,
a sequence of Möbius-transformed polynomials $Q=\left\{ Q_{n}(x)\right\} _{n=0}^{\infty}$
is obtained: 
\begin{equation}
Q_{n}(x)=\left(cx+d\right)^{n}R_{n}(x)=\left(cx+d\right)^{n}P_{n}(M(x)),\qquad n\geqslant0.\label{Qn}
\end{equation}
As before, the relationship between $P_{n}(x)$, $Q_{n}(x)$ and $R_{n}(x)$
is the following: 
\begin{equation}
P_{n}(x)=R_{n}(W(x))=\frac{Q_{n}(W(x))}{\left(cW(x)+d\right)^{n}}=\frac{\left(-1\right)^{n}}{\varDelta^{n}}\left(cx-a\right)^{n}Q_{n}(W(x)),\qquad n\geqslant0.\label{Pn}
\end{equation}

The properties of such sequences of Möbius-transformed rational functions
and polynomials will be analyzed in what follows.

\subsection{Orthogonality relations\label{Subsection OR}}

Let us begin our analysis by presenting the orthogonality relation
satisfied by the Möbius-transformed rational functions $R_{n}(x)$
and polynomials $Q_{n}(x)$.
\begin{thm}
\label{PropOrtho}Let $P=\left\{ P_{n}(x)\right\} _{n=0}^{\infty}$
be an orthogonal polynomial sequence on the real line whose polynomials
$P_{n}(x)$ satisfy the orthogonality relation (\ref{OCP}) for a
given weight function $w(x)$ which does not vanish along the orthogonality
interval $I=\left(l,r\right)\subseteq\mathbb{R}$. Then, the sequence
$R=\left\{ R_{n}(x)\right\} _{n=0}^{\infty}$ of the Möbius-transformed
rational functions $R_{n}(x)$ defined in (\ref{Rn}) will satisfy
the following orthogonality relations: 
\begin{equation}
\int_{\varGamma}R_{m}(x)R_{n}(x)\omega(x)\mathrm{d}x=K_{m}\delta_{m,n},\label{OCR}
\end{equation}
where 
\begin{equation}
\omega(x)=w(M(x))M'(x)=\varDelta\frac{w(M(x))}{\left(cx+d\right)^{2}}.\label{omega}
\end{equation}
Similarly, the sequence $Q=\left\{ Q_{n}(x)\right\} _{n=0}^{\infty}$
of Möbius-transformed polynomials $Q_{n}(x)$ defined in (\ref{Qn})
will satisfy orthogonality relations of the form, 
\begin{equation}
\int_{\varGamma}Q_{m}(x)Q_{n}(x)\omega_{m,n}(x)\mathrm{d}x=K_{m}\delta_{m,n}\qquad\text{where}\qquad\omega_{m,n}(x)=\frac{\omega(x)}{\left(cx+d\right)^{m+n}}.\label{OCQ}
\end{equation}
In both cases, the path of integration is 
\begin{equation}
\varGamma=\{z\in\mathbb{C}:z=W(x),\lambda<z<\rho\},\qquad\text{where}\qquad\lambda=W(l)\qquad\text{and}\qquad\rho=W(r),\label{Gamma}
\end{equation}
which corresponds to a curve (a straight line or a circle) in the
complex plane whose initial and final points are $\lambda=W(l)$ and
$\rho=W(r)$, respectively. Finally, the new weight functions $\omega(x)$
and $\omega_{m,n}(x)$ do not vanish along the curve $\varGamma$,
except at infinity.
\end{thm}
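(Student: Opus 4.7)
The plan is to derive both orthogonality relations from the given real-line orthogonality~(\ref{OCP}) by performing the change of variables $x = M(y)$, equivalently $y = W(x)$, inside the integral. Since $M$ and $W$ are mutually inverse and Möbius transformations map generalized circles (lines and circles in $\mathbb{C}_\infty$) to generalized circles, the preimage of the real segment $(l,r)$ under $M$ is precisely a circular arc or straight-line segment $\varGamma$ from $\lambda = W(l)$ to $\rho = W(r)$. This justifies the geometric description of $\varGamma$ in~(\ref{Gamma}) without further work.

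Second, I would execute the substitution. By definition~(\ref{Rn}), $P_m(x)P_n(x) = R_m(y)R_n(y)$, while $dx = M'(y)\,dy = \varDelta\,(cy+d)^{-2}\,dy$. Carrying the orientation of integration from $\lambda$ to $\rho$ along $\varGamma$, the right-hand side of~(\ref{OCP}) transforms into $\int_\varGamma R_m(y) R_n(y)\, \omega(y)\, dy$ with $\omega(y) = w(M(y))\,M'(y)$, matching~(\ref{omega}) and proving~(\ref{OCR}). The polynomial orthogonality~(\ref{OCQ}) then follows immediately by writing $R_n(y) = Q_n(y)/(cy+d)^n$, as given by~(\ref{Qn}): the factor $(cy+d)^{-(m+n)}$ comes out of the product $R_m R_n$ and, absorbed into $\omega$, yields exactly the varying weight $\omega_{m,n}(y) = \omega(y)/(cy+d)^{m+n}$.

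Finally, I would verify that $\omega$ and $\omega_{m,n}$ do not vanish on $\varGamma$ except at infinity. Bijectivity of $M:\varGamma\to(l,r)$ combined with the hypothesis that $w(x)\neq 0$ on $(l,r)$ gives $w(M(y))\neq 0$ on $\varGamma$, so $\omega$ can only vanish where $(cy+d)^{-2}$ vanishes, i.e.\ in the limit $y\to\infty$; the same conclusion applies to $\omega_{m,n}$, whose additional factor $(cy+d)^{-(m+n)}$ likewise vanishes only at infinity.

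The main obstacle, and really the only nontrivial point, is the rigor of the change of variables when $(l,r)$ is semi-infinite or infinite. In that case $\varGamma$ either terminates at, or passes through, the point $y=-d/c$ at which $M'$ has a double pole, and the contour integral is improper. I would handle this by truncating $(l,r)$ to a compact subinterval $(l_\varepsilon,r_\varepsilon)$, applying the change of variables on the resulting compact arc $W((l_\varepsilon,r_\varepsilon))\subset\varGamma$ (where everything is genuinely analytic and bounded), and then passing to the limit $\varepsilon\to 0$. Absolute convergence of the original integrals (guaranteed by the integrability of $P_mP_n w$ against the classical weights) transfers, via the substitution, to absolute convergence of the contour integrals, so dominated convergence closes the argument.
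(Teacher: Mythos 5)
Your proposal is correct and follows essentially the same route as the paper's proof: a change of variables $x=M(y)$ in (\ref{OCP}) with Jacobian $M'(y)=\varDelta(cy+d)^{-2}$, absorption of the factors $(cy+d)^{m}$ and $(cy+d)^{n}$ into the varying weight to get (\ref{OCQ}), identification of $\varGamma$ as the image of $(l,r)$ under $W$, and the observation that $\omega$ can vanish only where $y\to\infty$ (the paper reaches this by evaluating $\omega(W(x))$ and locating the zero at $x=a/c$, i.e.\ $y=W(a/c)=\infty$, which is the same conclusion). Your closing paragraph on truncation and dominated convergence for the improper/disconnected cases is an added layer of rigor the paper omits, but it does not change the argument.
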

\begin{proof}
The orthogonality relation (\ref{OCR}) for the Möbius-transformed
rational functions $R_{n}(x)$ follows from the change of variable
$x=M(y)$ in the integral (\ref{OCP}), taking into account that the
Jacobian is $M'(x)=\varDelta/\left(cx+d\right)^{2}$. The corresponding
orthogonality condition (\ref{OCQ}) for the Möbius-transformed polynomials
$Q_{n}(x)$ follows from their definitions given in (\ref{Qn}) and
from (\ref{OCR}), after we absorb the factors $\left(cx+d\right)^{m}$
and $\left(cx+d\right)^{n}$ into the varying weight function $\omega_{m,n}(x)$.
Besides, notice that the path of integration $I=\{x\in\mathbb{R}:l<x<r\}$
is mapped through the inverse Möbius-transformation, $W(x)$, onto
the curve $\varGamma$, as defined in (\ref{Gamma}), as well as the
limits $x=l$ and $x=r$ are respectively mapped to $\lambda=W(l)$
and $\rho=W(r)$. Finally, consider a point $y$ on the curve $\varGamma$
so that we can write $y=W(x)$ where $x\in I$; in this case, (\ref{omega})
simplifies to $\omega(W(x))=-w(x)\left(cx-a\right)$. Thus, provided
that $w(x)$ does not vanish on $I$, we conclude that $\omega(W(x))$
can vanish only at $x=a/c$; but this means that $y=W(a/c)=\infty$
so that $\omega(y)$ can vanish only at $y=\infty$ along the curve
$\varGamma$, and the same line of reasoning holds for $\omega_{m,n}(x)$. 
\end{proof}
Notice that integrals (\ref{OCR}) and (\ref{OCQ}) should be regarded
as a complex contour integrals. Whenever the point $x=a/c$ is not
on the orthogonality curve $I=\left(l,r\right)\subseteq\mathbb{R}$,
there is nothing to worry about, as in this case the orthogonality
curve $\varGamma$ will be finite, open and connected, so that these
integrals will depend only on their extreme points $\lambda=W(l)$
and $\rho=W(r)$. However, some care should be taken otherwise. In
fact, if $x=a/c$ is on $I$, then the curve $\varGamma$ will no
longer be connected: instead, it will be constituted by two disjoint
straight lines in the complex plane passing through the point $W(a/c)=\infty$
--- the first one starts at $\lambda=W(l)$ and goes towards the
complex infinity, while the other starts at infinity and ends at $\rho=W(r)$.
Notice as well that if $I$ comprehends the whole real line (as in
the case of Hermite polynomials, for instance), then $\varGamma$
become either an infinite straight line or a circle in the complex
plane; in fact, we have that $W(-\infty)=W(\infty)=-d/c$. Notice
that in the extended complex plane $\mathbb{C}_{\infty}$, $\varGamma$
will be always a connected curve.

The weight function $\omega_{m,n}(x)$ that appears in the orthogonality
relation (\ref{OCQ}) is a special type of which is called a \emph{varying
weight function}. Varying weight functions are measures that depend
on some parameters of the polynomials entering in the orthogonality
relation. Such kind of weights arises naturally from sequences of
orthogonal polynomials $P=\left\{ P_{n}^{\alpha}(x)\right\} _{n=0}^{\infty}$
whose members depend on one (or more) parameter $\alpha$ (e.g., the
associated Laguerre polynomials $L_{n}^{\alpha}(x)$). In fact, provided
that, for fixed $\alpha$, these polynomials satisfy an orthogonality
relation of the usual form (\ref{OCP}) with respect with a given
weight function $w_{\alpha}(x)$, we can construct from $P$ a modified
sequence $P^{\left\{ \alpha\right\} }=\left\{ P_{n}^{\alpha_{n}}(x)\right\} _{n=0}^{\infty}$
in which each polynomial $P_{n}^{\alpha_{n}}(x)$ has a different
parameter $\alpha_{n}$; it follows in this case that the polynomials
$P_{n}^{\alpha_{n}}(x)$ will satisfy an orthogonality relation with
respect to a varying weight function, namely:
\[
\int_{l}^{r}P_{m}^{\alpha_{m}}(x)P_{n}^{\alpha_{n}}(x)w_{\alpha_{m},\alpha_{n}}(x)\mathrm{d}x=K_{m}\delta_{m,n}\qquad\text{where},\qquad w_{\alpha_{m},\alpha_{n}}(x)=\begin{cases}
w_{\alpha_{n}}(x), & m<n,\\
w_{\alpha_{m}}(x), & m>n,\\
w_{\alpha}(x), & m=n,
\end{cases}
\]
for some positive constants $K_{m}$, $m\geqslant0$. This works because
every polynomial of degree $n$ that belongs to a sequence of orthogonal
polynomials on the real line is orthogonal to any polynomial of degree
less than $n$ --- thus, to establish the orthogonality of a sequence
of polynomials with different parameters we just need to pick up the
weight function belonging to the polynomial of higher degree appearing
on the orthogonality relation; besides, when the two polynomials have
the same degree, they will also have the same value of $\alpha$ so
that the value of the integral is the same as that one for the fixed
value $\alpha_{m}=\alpha_{n}$ of $\alpha$. Finally, we remark that
sequences of polynomials orthogonal with respect to varying measures
appear in several applications, both in mathematics \citep{Totik1998}
as in physics \citep{Dehesa2001}. 

\subsection{Three-term recurrence relations}

It is well-known that any sequence $P=\left\{ P_{n}(x)\right\} _{n=0}^{\infty}$
of orthogonal polynomials on the real line satisfies a three-term
recurrence relation of the form \citep{Szego1939,Chihara2011}: 
\begin{equation}
P_{n}(x)=\left(A_{n}x-B_{n}\right)P_{n-1}(x)-C_{n}P_{n-2}(x),\qquad n\geqslant1,\label{RRP}
\end{equation}
with the initial conditions $P_{-1}(x)=0$ and $P_{0}(x)=1$, where
$A_{n}$, $B_{n}$ and $C_{n}$ are real constants with $A_{n}>0$
and $C_{n}>0$ for each $n\geqslant1$. In the following, we shall
show that the sequences $R=\left\{ R_{n}(x)\right\} _{n=0}^{\infty}$
and $Q=\left\{ Q_{n}(x)\right\} _{n=0}^{\infty}$ of Möbius-transformed
rational functions and polynomials also satisfy three-term recurrence
relation.
\begin{thm}
\label{PropRecursion}Let $P=\left\{ P_{n}(x)\right\} _{n=0}^{\infty}$
be orthogonal polynomial sequence on the real line whose polynomials
$P_{n}(x)$ satisfy the three-term recurrence condition (\ref{RRP}).
Then, the sequence $R=\left\{ R_{n}(x)\right\} _{n=0}^{\infty}$ of
rational functions $R_{n}(x)$ defined in (\ref{Rn}) will satisfy
a three term relation of the following form:
\begin{equation}
R_{n}(x)=\left[A_{n}M(x)-B_{n}\right]R_{n-1}(x)-C_{n}R_{n-2}(x)=\left[A_{n}\left(\frac{ax+b}{cx+d}\right)-B_{n}\right]R_{n-1}(x)-C_{n}R_{n-2}(x),\qquad n\geqslant1,\label{RRR}
\end{equation}
with the initial conditions $R_{-1}(x)=0$ and $R_{1}(x)=1$. Similarly,
the sequence $Q=\left\{ Q_{n}(x)\right\} _{n=0}^{\infty}$ of Möbius-transformed
polynomials $Q_{n}(x)$ defined in (\ref{Qn}), will satisfy the three-term
recurrence relation: 
\begin{equation}
Q_{n}(x)=\left(\mathcal{A}_{n}x-\mathcal{B}_{n}\right)Q_{n-1}(x)-\mathcal{C}_{n}\left(cx+d\right)^{2}Q_{n-2}(x)\qquad n\geqslant1,\label{RRQ}
\end{equation}
with the initial conditions $Q_{-1}(x)=0$ and $Q_{0}(x)=1$, where,
\begin{equation}
\mathcal{A}_{n}=\left(aA_{n}-cB_{n}\right),\qquad\mathcal{B}_{n}=\left(dB_{n}-bA_{n}\right),\qquad\mathcal{C}_{n}=C_{n}.
\end{equation}
\end{thm}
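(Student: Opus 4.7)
The plan is to derive both recurrences directly from the known recurrence \eqref{RRP} for $P_n$ by exploiting the definitional relations $R_n(x)=P_n(M(x))$ and $Q_n(x)=(cx+d)^n R_n(x)$. Since the argument $M(x)$ does not depend on $n$, the first recurrence requires almost no work; the second one is a short algebraic manipulation of multiplying through by an appropriate power of $cx+d$ and collecting terms.

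For the sequence $R$, I would simply substitute $x\mapsto M(x)$ everywhere in \eqref{RRP}. Because $P_{n}(M(x))=R_{n}(x)$ by definition, this substitution produces \eqref{RRR} line-by-line, and the initial conditions $R_{-1}(x)=0$, $R_{0}(x)=1$ follow from the corresponding conditions on $P_{-1}$ and $P_{0}$ (noting that $P_{0}=1$ is a constant, so composition with $M$ leaves it unchanged).

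For the sequence $Q$, I would start from the recurrence \eqref{RRR} just established and multiply both sides by $(cx+d)^{n}$. On the left-hand side this immediately yields $Q_{n}(x)$. On the right-hand side I would distribute as $(cx+d)^{n}=(cx+d)(cx+d)^{n-1}$ on the first term and $(cx+d)^{n}=(cx+d)^{2}(cx+d)^{n-2}$ on the second term. The factor $(cx+d)^{n-1}$ combines with $R_{n-1}(x)$ to produce $Q_{n-1}(x)$, and $(cx+d)^{n-2}$ combines with $R_{n-2}(x)$ to produce $Q_{n-2}(x)$. The remaining $(cx+d)$ on the first term absorbs the denominator of $M(x)=(ax+b)/(cx+d)$, leaving the polynomial expression
\[
A_{n}(ax+b)-B_{n}(cx+d)=(aA_{n}-cB_{n})x-(dB_{n}-bA_{n})=\mathcal{A}_{n}x-\mathcal{B}_{n},
\]
which is exactly the linear coefficient appearing in \eqref{RRQ}. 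The second term becomes $-C_{n}(cx+d)^{2}Q_{n-2}(x)=-\mathcal{C}_{n}(cx+d)^{2}Q_{n-2}(x)$, giving the stated identification $\mathcal{C}_{n}=C_{n}$. The initial conditions $Q_{-1}(x)=0$ and $Q_{0}(x)=1$ follow from $R_{-1}(x)=0$ and $R_{0}(x)=1$ together with $Q_{n}=(cx+d)^{n}R_{n}$.

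No real obstacle is expected; the only thing to double-check is that the factors of $(cx+d)$ are distributed in the right way so that the surviving coefficient of $Q_{n-1}(x)$ is a true polynomial in $x$ (which is what forces the combination $A_{n}(ax+b)-B_{n}(cx+d)$ above) and that the $(cx+d)^{2}$ in front of $Q_{n-2}$ cannot be absorbed into a linear coefficient — this is what makes the $Q$-recurrence genuinely different from the classical three-term pattern and worth stating explicitly.
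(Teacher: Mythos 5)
Your proposal is correct and follows essentially the same route as the paper: substitute $x\mapsto M(x)$ into the recurrence for $P_n$ (the paper phrases this as inserting $P_n(x)=R_n(W(x))$ and then changing variables $x=M(y)$, which is the same computation), then multiply by $(cx+d)^n$ and redistribute the powers to obtain the $Q$-recurrence with $\mathcal{A}_n x-\mathcal{B}_n=A_n(ax+b)-B_n(cx+d)$. As a minor aside, your initial condition $R_0(x)=1$ is the correct one; the paper's statement of ``$R_1(x)=1$'' is evidently a typographical slip.
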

\begin{proof}
Inserting relation (\ref{Pn}) into (\ref{RRP}), we get that, 
\begin{equation}
R_{n}(W(x))=\left(A_{n}x-B_{n}\right)R_{n-1}(W(x))-C_{n}R_{n-2}(W(x)).
\end{equation}
Now, making the change of variable $x=M(y)$, we obtain, at once,
\begin{align}
R_{n}(y) & =\left(A_{n}M(y)-B_{n}\right)R_{n-1}(y)-C_{n}R_{n-2}(y)=\left[A_{n}\left(\frac{ay+b}{cy+d}\right)-B_{n}\right]R_{n-1}(y)-C_{n}R_{n-2}(y).
\end{align}
It is clear that the relation above holds for any $n\geqslant1$ provided
we define $R_{-1}(x)=0$ and $R_{1}(x)=1$. Finally, (\ref{RRQ})
follows after we insert (\ref{Qn}) into the above expression. In
fact, we get in this way, 
\begin{equation}
\frac{Q_{n}(y)}{\left(cy+d\right)^{n}}=\left[A_{n}\left(\frac{ay+b}{cy+d}\right)-B_{n}\right]\frac{Q_{n-1}(y)}{\left(cy+d\right)^{n-1}}-C_{n}\frac{Q_{n-2}(y)}{\left(cy+d\right)^{n-2}},
\end{equation}
so that,
\begin{align}
Q_{n}(y) & =\left[A_{n}\left(\frac{ay+b}{cy+d}\right)-B_{n}\right]\left(cy+d\right)Q_{n-1}(y)-C_{n}\left(cy+d\right)^{2}Q_{n-2}(y),\nonumber \\
 & =\left[A_{n}\left(ay+b\right)-B_{n}\left(cy+d\right)\right]Q_{n-1}(y)-C_{n}\left(cy+d\right)^{2}Q_{n-2}(y),\nonumber \\
 & =\left[\left(aA_{n}-cB_{n}\right)y-\left(dB_{n}-bA_{n}\right)\right]Q_{n-1}(y)-C_{n}\left(cy+d\right)^{2}Q_{n-2}(y),
\end{align}
 which holds for any $n\geqslant1$ provided we define $Q_{-1}(x)=0$
and $Q_{0}(x)=1$.
\end{proof}

\subsection{Christoffel-Darboux identities }

Another important relation satisfied by any orthogonal polynomial
sequence on the real line is the so-called \emph{Christoffel-Darboux
identity} \citep{Szego1939,Chihara2011}: 
\begin{equation}
\sum_{k=0}^{n}\frac{A_{k+1}}{C_{0}\cdots C_{k+1}}P_{k}(x)P_{k}(y)=\frac{1}{C_{0}\cdots C_{n+1}}\frac{P_{n+1}(x)P_{n}(y)-P_{n}(x)P_{n+1}(y)}{x-y},\label{CDP}
\end{equation}
and its confluent form:
\begin{equation}
\sum_{k=0}^{n}\frac{A_{k+1}}{C_{0}\cdots C_{k+1}}P_{k}^{2}(x)=\frac{P_{n+1}'(x)P_{n}(x)-P_{n}'(x)P_{n+1}(x)}{C_{0}\cdots C_{n+1}}.\label{CCDP}
\end{equation}
 Similar identities also exist for the sequences of Möbius-transformed
rational functions and polynomials, as we shall describe in the following:
\begin{thm}
\label{PropCD}Let $P=\left\{ P_{n}(x)\right\} _{n=0}^{\infty}$ be
an orthogonal polynomial sequence on the real line that satisfies
the Christoffel-Darboux identities (\ref{CDP}) and (\ref{CCDP}).
Then the sequence $R=\left\{ R_{n}(x)\right\} _{n=0}^{\infty}$ of
Möbius-transformed rational functions $R_{n}(x)$ defined in (\ref{Rn})
will satisfy the following identities: 
\begin{equation}
\sum_{k=0}^{n}\frac{A_{k+1}}{C_{0}\cdots C_{k+1}}R_{k}(x)R_{k}(y)=\left(cx+d\right)\left(cy+d\right)\left[\frac{R_{n+1}(x)R_{n}(y)-R_{n}(x)R_{n+1}(y)}{\varDelta\left(C_{0}\cdots C_{n+1}\right)\left(x-y\right)}\right],\label{CDR}
\end{equation}
and 
\begin{equation}
\sum_{k=0}^{n}\frac{A_{k+1}}{C_{0}\cdots C_{k+1}}R_{k}^{2}(y)=\left(cy+d\right)^{2}\left[\frac{R_{n+1}'(y)R_{n}(y)-R_{n}'(y)R_{n+1}(y)}{\varDelta\left(C_{0}\cdots C_{n+1}\right)}\right].\label{CCDR}
\end{equation}
Similarly, the sequence $Q=\left\{ Q_{n}(x)\right\} _{n=0}^{\infty}$
of Möbius-transformed polynomials $Q_{n}(x)$ defined in (\ref{Qn})
will satisfy the identities: 
\begin{equation}
\sum_{k=0}^{n}\frac{A_{k+1}}{C_{0}\cdots C_{k+1}}\frac{Q_{k}(x)}{\left(cx+d\right)^{k}}\frac{Q_{k}(y)}{\left(cy+d\right)^{k}}=\frac{Q_{n+1}(x)Q_{n}(y)\left(cy+d\right)-\left(cx+d\right)Q_{n}(x)Q_{n+1}(y)}{\varDelta\left(C_{0}\cdots C_{n+1}\right)\left(cx+d\right)^{n}\left(cy+d\right)^{n}\left(x-y\right)},\label{CDQ}
\end{equation}
and 
\begin{equation}
\sum_{k=0}^{n}\frac{A_{k+1}}{C_{0}\cdots C_{k+1}}\frac{Q_{k}^{2}(x)}{\left(cx+d\right)^{2k}}=\frac{Q_{n+1}'(x)Q_{n}(x)-Q_{n}'(x)Q_{n+1}(x)-cQ_{n+1}(x)Q_{n}(x)}{\varDelta\left(C_{0}\cdots C_{n+1}\right)\left(cx+d\right)^{2n-1}}.\label{CCDQ}
\end{equation}
\end{thm}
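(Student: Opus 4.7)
The plan is to pull the theorem back to the original Christoffel--Darboux identities (\ref{CDP}) and (\ref{CCDP}) via the substitution $P_k(M(x))=R_k(x)$, in the same spirit as the recurrence-relation argument used in the proof of Theorem \ref{PropRecursion}. Concretely, I would first evaluate (\ref{CDP}) at the two points $M(x)$ and $M(y)$; the left-hand side becomes the $R_k R_k$ sum appearing in (\ref{CDR}) immediately. For the right-hand side, the only nontrivial step is computing the denominator $M(x)-M(y)$. A direct calculation gives
\begin{equation*}
M(x)-M(y)=\frac{(ax+b)(cy+d)-(ay+b)(cx+d)}{(cx+d)(cy+d)}=\frac{\varDelta\,(x-y)}{(cx+d)(cy+d)},
\end{equation*}
so the factor $(cx+d)(cy+d)/\varDelta$ emerges naturally, yielding exactly (\ref{CDR}).

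For the confluent identity (\ref{CCDR}), I would evaluate (\ref{CCDP}) at $y\mapsto M(y)$ and use the chain rule $P_n'(M(y))=R_n'(y)/M'(y)=R_n'(y)(cy+d)^2/\varDelta$ to rewrite the Wronskian-like numerator. Each derivative contributes a factor $(cy+d)^2/\varDelta$, but since the numerator is bilinear in $(P_{n+1},P_n)$ with one derivative per term, only a single such factor survives in the end, producing the $(cy+d)^2$ that appears in front of the bracket in (\ref{CCDR}). Alternatively, (\ref{CCDR}) can be recovered from (\ref{CDR}) by passing to the limit $y\to x$ with l'H\^opital's rule, which avoids invoking (\ref{CCDP}) directly.

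Passing from the $R_n$ identities to the $Q_n$ identities (\ref{CDQ}) and (\ref{CCDQ}) is then purely algebraic, by means of the defining relation $R_n(x)=Q_n(x)/(cx+d)^n$ from (\ref{Qn}). For (\ref{CDQ}), substitution into (\ref{CDR}) is immediate: each $R_k(x)R_k(y)$ becomes $Q_k(x)Q_k(y)/[(cx+d)^k(cy+d)^k]$, and on the right the single pair of factors $(cx+d)(cy+d)$ survives against $(cx+d)^{n+1}(cy+d)^{n+1}$ in the denominators, leaving the $(cx+d)^n(cy+d)^n$ written in (\ref{CDQ}).

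The main technical point is the confluent formula (\ref{CCDQ}), where the derivative $R_n'(x)=Q_n'(x)(cx+d)^{-n}-nc\,Q_n(x)(cx+d)^{-n-1}$ introduces an extra term. When forming $R_{n+1}'R_n-R_n'R_{n+1}$, the coefficients $-(n+1)c$ and $+nc$ combine to $-c$, producing the tell-tale $-cQ_{n+1}(x)Q_n(x)$ summand that is peculiar to the $Q_n$ version and has no analogue in the classical identity. This bookkeeping of the $(cx+d)$-powers, and verifying that the two terms recombine into the form displayed in (\ref{CCDQ}), is the only step that requires care; everything else reduces to the change of variable $x\mapsto M(x)$ and the identity for $M(x)-M(y)$ computed above.
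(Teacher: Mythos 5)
Your proof follows the paper's own argument essentially verbatim: substitute $x\mapsto M(x)$, $y\mapsto M(y)$ into (\ref{CDP}) and (\ref{CCDP}), use $M(x)-M(y)=\varDelta(x-y)/[(cx+d)(cy+d)]$ together with the chain rule $P_n'(M(y))=R_n'(y)(cy+d)^2/\varDelta$, and then pass to the $Q_n$ identities via $R_n(x)=Q_n(x)/(cx+d)^n$; your bookkeeping of the extra $-c\,Q_{n+1}Q_n$ term is precisely the step the paper leaves as ``expand the derivatives and simplify.'' One caveat: carrying your computation through literally yields the last term of (\ref{CCDQ}) with denominator $(cx+d)^{2n}$ rather than $(cx+d)^{2n-1}$ --- i.e.\ the right-hand side should read $\bigl[(cx+d)\bigl(Q_{n+1}'(x)Q_n(x)-Q_n'(x)Q_{n+1}(x)\bigr)-cQ_{n+1}(x)Q_n(x)\bigr]/\bigl[\varDelta(C_0\cdots C_{n+1})(cx+d)^{2n}\bigr]$, as the $n=0$ check with $Q_0=1$ and $Q_1=A_1(ax+b)-B_1(cx+d)$ confirms --- so the discrepancy is a typo in the displayed identity, not a flaw in your argument, but you should not assert that the terms recombine into the form printed in (\ref{CCDQ}) without noting this.
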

\begin{proof}
From (\ref{Pn}) we get that (\ref{CDP}) becomes, 
\begin{equation}
\sum_{k=0}^{n}\frac{A_{k+1}}{C_{0}\cdots C_{k+1}}R_{k}(W(x))R_{k}(W(y))=\frac{1}{C_{0}\cdots C_{n+1}}\frac{R_{n+1}(W(x))R_{n}(W(y))-R_{n}(W(x))R_{n+1}(W(y))}{x-y}.
\end{equation}
 Now, making the change of variables $x=M(z)$ and $t=M(y)$, we obtain,
\begin{equation}
\sum_{k=0}^{n}\frac{A_{k+1}}{C_{0}\cdots C_{k+1}}R_{k}(z)R_{k}(t)=\frac{1}{C_{0}\cdots C_{n+1}}\frac{R_{n+1}(z)R_{n}(t)-R_{n}(z)R_{n+1}(t)}{M(z)-M(t)}.
\end{equation}
 But $M(z)-M(t)=\varDelta(z-t)/\left[\left(ct+d\right)\left(cz+d\right)\right]$,
so that we get, 
\begin{equation}
\sum_{k=0}^{n}\frac{A_{k+1}}{C_{0}\cdots C_{k+1}}R_{k}(z)R_{k}(t)=\frac{1}{\varDelta}\frac{\left(cz+d\right)\left(ct+d\right)}{C_{0}\cdots C_{n+1}}\left[\frac{R_{n+1}(z)R_{n}(t)-R_{n}(z)R_{n+1}(t)}{z-t}\right],
\end{equation}
 which is of the same form as (\ref{CDR}). Now, to prove (\ref{CCDR}),
we proceed in a similar fashion: using (\ref{Pn}) into (\ref{CCDP}),
we get, 
\begin{equation}
\sum_{k=0}^{n}\frac{A_{k+1}}{C_{0}\cdots C_{k+1}}R_{k}^{2}(W(x))=\frac{1}{C_{0}\cdots C_{n+1}}\left\{ \frac{\mathrm{d}}{\mathrm{d}x}\left[R_{n+1}(W(x))\right]R_{n}(W(x))-\frac{\mathrm{d}}{\mathrm{d}x}\left[R_{n}(W(x))\right]R_{n+1}(W(x))\right\} ,
\end{equation}
that is, 
\begin{equation}
\sum_{k=0}^{n}\frac{A_{k+1}}{C_{0}\cdots C_{k+1}}R_{k}^{2}(W(x))=\frac{W'(x)}{C_{0}\cdots C_{n+1}}\left\{ R_{n+1}'(W(x))R_{n}(W(x))-R_{n}'(W(x))R_{n+1}(W(x))\right\} .
\end{equation}
 Thus, making the change of variables $x=M(y)$ and using the identity
$W'(M(y))=\left(cy+d\right)^{2}/\varDelta$, we obtain, 
\begin{equation}
\sum_{k=0}^{n}\frac{A_{k+1}}{C_{0}\cdots C_{k+1}}R_{k}^{2}(y)=\frac{1}{\varDelta}\frac{\left(cy+d\right)^{2}}{C_{0}\cdots C_{n+1}}\left\{ R_{n+1}'(y)R_{n}(y)-R_{n}'(y)R_{n+1}(y)\right\} ,
\end{equation}
which is identical to (\ref{CCDR}). Finally, to prove (\ref{CDQ})
and (\ref{CCDQ}), we just need to use (\ref{Qn}). With that, (\ref{CDQ})
becomes: 
\begin{multline}
\sum_{k=0}^{n}\frac{A_{k+1}}{C_{0}\cdots C_{k+1}}\frac{Q_{k}(z)}{\left(cz+d\right)^{k}}\frac{Q_{k}(z)}{\left(ct+d\right)^{k}}=\\
\frac{1}{\varDelta}\frac{\left(cz+d\right)\left(ct+d\right)}{C_{0}\cdots C_{n+1}}\frac{1}{z-t}\left[\frac{Q_{n+1}(z)}{\left(cz+d\right)^{n+1}}\frac{Q_{n}(z)}{\left(ct+d\right)^{n}}-\frac{Q_{n}(z)}{\left(cz+d\right)^{n}}\frac{Q_{n+1}(z)}{\left(ct+d\right)^{n+1}}\right],
\end{multline}
so that (\ref{CCDR}) follows after a simplification. Now, using (\ref{Qn})
into (\ref{CCDQ}), we get, 
\begin{multline}
\sum_{k=0}^{n}\frac{A_{k+1}}{C_{0}\cdots C_{k+1}}\frac{Q_{k}^{2}(y)}{\left(cy+d\right)^{2k}}=\\
\frac{1}{\varDelta}\frac{\left(cy+d\right)^{2}}{C_{0}\cdots C_{n+1}}\left\{ \frac{\mathrm{d}}{\mathrm{d}x}\left[\frac{Q_{n+1}(y)}{\left(cy+d\right)^{n+1}}\right]\frac{Q_{n}(y)}{\left(cy+d\right)^{n}}-\frac{\mathrm{d}}{\mathrm{d}x}\left[\frac{Q_{n}(y)}{\left(cy+d\right)^{n}}\right]\frac{Q_{n+1}(y)}{\left(cy+d\right)^{n+1}}\right\} ,
\end{multline}
and (\ref{CCDQ}) follows after we expand the derivatives and simplify
the resulting expression.
\end{proof}

\subsection{Properties of the zeros. }

It is well-known that the zeros of polynomials belonging to a sequence
$P=\left\{ P_{n}(x)\right\} _{n=0}^{\infty}$ of orthogonal polynomials
on the real line are all simple and lie in the orthogonality interval
$\mathcal{I}=(l,r)\subseteq\mathbb{R}$; moreover, for $n>1$, the
zeros of $P_{n}(x)$ interlace with the zeros of $P_{n+1}(x)$ \citep{Szego1939,Chihara2011}.
In this section, we shall show that the Möbius-transformed polynomials
defined in (\ref{Qn}) also present similar properties.
\begin{thm}
\label{PropRealSimple}Let $P_{n}(x)$ be a polynomial of degree $n\geqslant1$
that belongs to a sequence $P=\left\{ P_{n}(x)\right\} _{n=0}^{\infty}$
of orthogonal polynomials defined in a given interval $\mathcal{I}=(l,r)\subseteq\mathbb{R}$
of the real line and such that $P_{n}(a/c)\neq0$ for all $n$. Then,
the zeros of the corresponding Möbius-transformed polynomials $Q_{n}(x)$
defined in (\ref{Qn}), for $n\geqslant1$, will all lie on the curve
of orthogonality $\varGamma$, as defined in (\ref{Gamma}). Moreover,
these zeros are all simple.
\end{thm}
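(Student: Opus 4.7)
The plan is to reduce everything to what we already know about the zeros of the underlying real orthogonal polynomial sequence $P=\{P_n(x)\}_{n=0}^\infty$ and to apply Theorems~\ref{PropNdegree} and~\ref{PropRoots}.

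First I would fix $n\geqslant 1$ and invoke Theorem~\ref{PropNdegree}: since by hypothesis $P_n(a/c)\neq 0$ (and the case $c=0$ is handled separately and trivially), the M\"obius-transformed polynomial $Q_n(x)$ has degree exactly $n$, so it has precisely $n$ zeros $\zeta_1,\ldots,\zeta_n$ in $\mathbb{C}$, counted with multiplicity. Next I would apply Theorem~\ref{PropRoots} to express those zeros in terms of the zeros $\xi_1,\ldots,\xi_n$ of $P_n(x)$ via $\zeta_k=W(\xi_k)$, $1\leqslant k\leqslant n$.

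Now I would feed in the classical fact that, because $P$ is an orthogonal polynomial sequence on the real line, the zeros $\xi_1,\ldots,\xi_n$ of $P_n(x)$ are all simple and all lie in the orthogonality interval $\mathcal{I}=(l,r)$. Since $W$ is a M\"obius transformation, it is a bijection of the extended complex plane $\mathbb{C}_\infty$ onto itself; in particular, its restriction to $\mathcal{I}$ is injective and maps $\mathcal{I}$ onto the curve $\varGamma$ by the very definition~(\ref{Gamma}). Therefore the $n$ points $\zeta_k=W(\xi_k)$ are pairwise distinct and all belong to $\varGamma$.

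Finally, since $Q_n(x)$ has degree $n$ and possesses $n$ pairwise distinct zeros, each of these zeros must have multiplicity one; i.e.\ all zeros of $Q_n(x)$ are simple and lie on $\varGamma$. No step here is really delicate; the only point that deserves attention is ensuring that $p(a/c)\neq 0$ is used both to preserve the degree (so that we do not ``lose'' zeros to infinity, which is exactly the phenomenon described after Theorem~\ref{PropRoots}) and to guarantee that none of the $\xi_k$ equals $a/c$, so that each image $W(\xi_k)$ is a genuine finite point of $\varGamma$. This is the only subtlety and it is already built into the hypothesis $P_n(a/c)\neq 0$.
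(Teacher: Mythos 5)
Your proof is correct and follows essentially the same route as the paper: combine Theorems \ref{PropNdegree} and \ref{PropRoots} with the classical facts that the zeros of $P_n$ are simple and lie in $\mathcal{I}$, then use the bijectivity of $W$ on $\mathbb{C}_\infty$ to place the images on $\varGamma$ and keep them distinct. Your degree-counting step ($n$ distinct zeros of a degree-$n$ polynomial forces simplicity) is in fact a slightly more explicit justification of simplicity than the paper's one-line appeal to injectivity, but it is the same argument in substance.
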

\begin{proof}
From Theorems \ref{PropNdegree} and \ref{PropRoots}, it follows
that the zeros $\zeta_{1}^{n},\ldots,\zeta_{n}^{n}$ of the Möbius-transformed
polynomials $Q_{n}(x)$ are related to the zeros $\xi_{1}^{n},\ldots,\xi_{n}^{n}$
of the original polynomials $P_{n}(x)$ by the relations, 
\begin{equation}
\zeta_{1}^{n}=W(\xi_{1}^{n}),\qquad\zeta_{2}^{n}=W(\xi_{2}^{n}),\qquad\cdots\qquad\zeta_{n}^{n}=W(\xi_{n}^{n}),
\end{equation}
provided that $a/c$ is not a zero of $P_{n}(x)$. This shows us that
zeros of $Q_{n}(x)$ will be all finite and will lie on the curve
$\varGamma$ given in (\ref{Gamma}), which corresponds to the image
of the interval $\mathcal{I}=(l,r)\subseteq\mathbb{R}$ under the
action of the inverse Möbius-transformation $W(x)$. Now, these zeros
must be also simple because the Möbius transformation is one-to-one
and onto on the extended complex plane. 
\end{proof}
\begin{thm}
\label{PropInterlace}The zeros of any two consecutive polynomials
$Q_{n}(x)$ and $Q_{n+1}(x)$, for $n\geqslant1$, that belong to
the sequence $Q=\left\{ Q_{n}(x)\right\} _{n=0}^{\infty}$ of Möbius-transformed
orthogonal polynomials $Q_{n}(x)$ defined in (\ref{Qn}) interlace
on the new curve of orthogonality $\varGamma$ given by (\ref{Gamma}).
\end{thm}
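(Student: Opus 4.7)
The plan is to transfer the classical interlacing property from the real line to $\varGamma$ via the inverse Möbius transformation $W$, using Theorem \ref{PropRoots} as the bridge. First I would recall that, by the standard theory of orthogonal polynomials on the real line, if $\xi_{1}^{n}<\xi_{2}^{n}<\cdots<\xi_{n}^{n}$ denote the zeros of $P_{n}(x)$ inside $\mathcal{I}=(l,r)$ and $\xi_{1}^{n+1}<\cdots<\xi_{n+1}^{n+1}$ the zeros of $P_{n+1}(x)$, then
\begin{equation}
\xi_{k}^{n+1}<\xi_{k}^{n}<\xi_{k+1}^{n+1},\qquad 1\leqslant k\leqslant n.
\end{equation}
By Theorem \ref{PropRoots}, provided $a/c$ is not a zero of any $P_{n}$, the zeros of the M\"obius-transformed polynomials are $\zeta_{k}^{n}=W(\xi_{k}^{n})$ and $\zeta_{k}^{n+1}=W(\xi_{k}^{n+1})$, and by Theorem \ref{PropRealSimple} they all lie on $\varGamma$.

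Next I would argue that the natural parametrization of $\varGamma$ by $t\in(l,r)\mapsto W(t)\in\varGamma$ endows the curve with a total order inherited from $\mathbb{R}$. Since $W$ is a homeomorphism of $\mathbb{C}_{\infty}$ onto itself and is continuous and injective on $\mathcal{I}$, its restriction to $\mathcal{I}$ is an order-isomorphism between $(l,r)$ and $\varGamma$ (viewed as the oriented arc traced by $W(t)$ as $t$ increases from $l$ to $r$ in $\mathbb{C}_{\infty}$). Consequently, the interlacing inequalities for the $\xi_{k}$ translate term-by-term into
\begin{equation}
\zeta_{k}^{n+1}\prec\zeta_{k}^{n}\prec\zeta_{k+1}^{n+1},\qquad 1\leqslant k\leqslant n,
\end{equation}
where $\prec$ denotes the order induced on $\varGamma$ by the parametrization $W$. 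This is exactly what is meant by interlacing of the zeros of $Q_{n}(x)$ and $Q_{n+1}(x)$ along the orthogonality curve.

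The step I expect to require the most care is the handling of the case when $a/c\in\mathcal{I}$, because then $\varGamma$ is not connected as a subset of $\mathbb{C}$: it consists of two rays meeting only at the point at infinity. In that situation one must clarify that the notion of "between" is the one induced by the parametrization above, equivalently by regarding $\varGamma$ as a connected curve in the Riemann sphere $\mathbb{C}_{\infty}$ (so that infinity is simply an interior point of $\varGamma$ through which the ordering passes continuously). Once this convention is fixed, the proof is a one-line consequence of the fact that $W\colon\mathcal{I}\to\varGamma$ is an order-preserving homeomorphism, together with Theorem \ref{PropRoots}. No calculation beyond what is already available is needed; the whole content is the transport of the real-line interlacing by a bijective continuous map.
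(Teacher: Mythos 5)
Your proposal is correct and follows essentially the same route as the paper: both transfer the classical real-line interlacing of the zeros of $P_{n}$ and $P_{n+1}$ to $\varGamma$ by using the fact that $W$ is a continuous bijection of $\mathbb{C}_{\infty}$, so that it preserves (or at most reverses) the ordering of points of $\mathcal{I}$ along the image curve. Your explicit treatment of the case $a/c\in\mathcal{I}$, where $\varGamma$ passes through infinity, matches the clarifying remark the paper places immediately after its proof.
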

\begin{proof}
The Möbius transformations are one-to-one and onto on the extended
complex plane $\mathbb{C}_{\infty}$. In particular, this implies
that the image of the interval $\mathcal{I}=(l,r)\subseteq\mathbb{R}$
under the mapping $y=W(x)$, which is the curve $\varGamma$, is a
not-self-intersecting curve. Thus, the ordination of any set of points
lying in the interval $\mathcal{I}$ is either unaltered or only inverted
by a Möbius transformation. Because the zeros of $P_{n+1}(x)$ interlaces
with the zeros of $P_{n}(x)$ on $I$, we conclude from what was said
that the zeros of the Möbius-transformed polynomial $Q_{n+1}(x)$
will also interlace with the zeros of the polynomial $Q_{n}(x)$ over
$\varGamma$. 
\end{proof}
We remark that the ordination above should be understood as performed
in the extended complex plane $\mathbb{C}_{\infty}$. In the usual
complex plane, the ordination should be interpreted with care because
the curve $\varGamma$ can be disconnected, as already commented.
In fact, this will occur whenever the point $x=a/c$ lies in the interval
$\mathcal{I}=\left(l,r\right)\subseteq\mathbb{R}$ as, in this case,
the curve $\varGamma$ will cross the infinite. Thus, in this case
the curve $\varGamma$ will be constituted by to disconnected branches
in the complex plane and the ordination described above should be
understood as beginning in the point $\lambda=W(l)$, passing through
the infinity and ending in point $\rho=W(r)$ along the curve $\varGamma$. 

\begin{table}[H]
\centering%
\begin{tabular}{ll}
\hline 
Chebyshev polynomials & Inversion-transformed Chebyshev polynomials\tabularnewline
\hline 
$T_{0}(x)=1$ & $\mathcal{T}_{0}(x)=1$\tabularnewline
$T_{1}(x)=x$ & $\mathcal{T}_{1}(x)=1$\tabularnewline
$T_{2}(x)=-1+2x^{2}$ & $\mathcal{T}_{2}(x)=2-x^{2}$\tabularnewline
$T_{3}(x)=-3x+4x^{3}$ & $\mathcal{T}_{3}(x)=4-3x^{2}$\tabularnewline
$T_{4}(x)=1-8x^{2}+8x^{4}$ & $\mathcal{T}_{4}(x)=8-8x^{2}+x^{4}$\tabularnewline
$T_{5}(x)=5x-20x^{3}+16x^{5}$ & $\mathcal{T}_{5}(x)=16-20x^{2}+5x^{4}$\tabularnewline
$T_{6}(x)=-1+18x^{2}-48x^{4}+32x^{6}$ & $\mathcal{T}_{6}(x)=32-48x^{2}+18x^{4}-x^{6}$\tabularnewline
$T_{7}(x)=-7x+56x^{3}-112x^{5}+64x^{7}$ & $\mathcal{T}_{7}(x)=64-112x^{2}+56x^{4}-7x^{6}$\tabularnewline
$T_{8}(x)=1-32x^{2}+160x^{4}-256x^{6}+128x^{8}$ & $\mathcal{T}_{8}(x)=128-256x^{2}+160x^{4}-32x^{6}+x^{8}$\tabularnewline
\hline 
\end{tabular}

\caption{The first Möbius-transformed orthogonal polynomials $\mathcal{T}_{n}(x)=x^{n}T_{n}(M(x))$,
where $T_{n}(x)$ are the Chebyshev polynomials and $M(x)=1/x$ is
the inversion transformation. Notice that the polynomials $T_{n}(x)$
for odd $n$ have a zero at $x=0$, which implies that the Möbius-transformed
polynomials $\mathcal{T}_{n}(x)$ for odd $n$ will have even degree
$n^{\prime}=n-1$. Notwithstanding, these polynomials are orthogonal
on the interval $\mathcal{J}=\left(-\infty,-1\right)\cup\left(1,\infty\right)$
of the real line with respect to the varying weight function $\omega_{m,n}(x)=x^{-m-n-2}/\sqrt{1-x^{-2}}$.}

\label{InversionChebyshev}
\end{table}

A very interesting case occurs when some of the polynomials belonging
to the sequence $P=\left\{ P_{n}(x)\right\} _{n=0}^{\infty}$ of orthogonal
polynomials on the real line have a zero at the point $x=a/c$. Indeed,
according to Theorem \ref{PropNdegree}, if $P_{n}(a/c)=0$ for some
polynomials $P_{n}(x)\in P$ of degree $n$, then the corresponding
Möbius-transformed polynomials $Q_{n}(x)\in Q$ will have degree $n^{\prime}=n-1$.
Thus, $Q$ will be in this case a \emph{defective} sequence of orthogonal
Möbius-transformed polynomials, that is, a sequence containing polynomials
of repeated degrees whereas other polynomials of specific degrees
are absent. This particularity resembles what is called a sequence
of \emph{exceptional orthogonal polynomials} \citep{Gomez2009,Gomez2010},
which is a sequence of orthogonal polynomials on the real line that
does not contain some polynomials of specific degrees. A sequence
$Q$ of defective orthogonal polynomials, however, differ from the
sequences of exceptional orthogonal polynomials in two points: firstly
they usually are orthogonal in curves of the complex plane (not necessarily
on the real line); secondly, even though the sequence $Q$ contain
polynomials of repeated degrees, the orthogonality condition (\ref{OCQ})
is always satisfied, even for the defective polynomials\footnote{Here we remark that the parameters $m$ and $n$ on the varying weight
function $\omega_{m,n}(x)$ do not rely on the degrees of the polynomials
$Q_{m}(x)$ and $Q_{n}(x)$, but only to the order in which they appear
in the sequence $Q=\left\{ Q_{n}(x)\right\} _{n=0}^{\infty}$ of Möbius-transformed
polynomials.}. 

As an example of defective orthogonal polynomials arising from a Möbius
transformation, we can consider any sequence $S$ of orthogonal polynomials
defined in a symmetrical interval of the real line under the action
of the inversion map $M(x)=1/x$, so that we have $a/c=0$ in this
case. In fact, we know that every polynomial $P_{n}(x)\in S$ of odd
degree (say, $n=2m+1$) will have a zero at $x=0$, thence the corresponding
Möbius-transformed polynomials $Q_{2m+1}(x)\in Q$ will have actually
an even degree (say, $n'=2m$), i.e. they will be defective. The sequence
$Q$ of Möbius-transformed orthogonal polynomials will be, therefore,
constituted by polynomials of even degrees only, and in such a way
that two any consecutive polynomials belonging to this sequence will
have the same degree. In Table \ref{InversionChebyshev} we present
the first orthogonal Möbius-transformed polynomials for the Chebyshev
polynomials under the inversion map.

\section{Classical Möbius-transformed orthogonal polynomials\label{Section CMTOP}}

Among all the sequences of orthogonal polynomials on the real line,
the so-called \emph{classical orthogonal polynomial sequences} ---
which include the sequences of Jacobi, Laguerre and Hermite polynomials
--- are of particular interest. In fact, the classical families of
orthogonal polynomials were the first ones to be systematically studied,
which is justified by their great importance in applied mathematics,
physics and engineering. Besides, the classical orthogonal polynomials
have many properties that are not satisfied by any other sequences
of orthogonal polynomials: for example, they satisfy second-order
differential equations, their weight function also satisfy a linear
differential equation, there exist Rodrigues' type relations and generating
functions from which the classical orthogonal polynomials can be derived
and so on \citep{Szego1939,Chihara2011}. In this section, we shall
show that the Möbius-transformed rational functions and polynomials
that arise from a classical orthogonal polynomial sequence on the
real line enjoy similar properties.

\subsection{Second-order differential equation\label{Subsection ODE}}

It is a well-known result that any sequence $P=\{P_{n}(x)\}_{n=0}^{\infty}$
of classical orthogonal polynomial satisfy a second-order differential
equation of the form \citep{Szego1939,Chihara2011}: 
\begin{equation}
f(x)P_{n}''(x)+g(x)P_{n}'(x)+h_{n}(x)P_{n}(x)=0.\label{ODEP}
\end{equation}
Routh \citep{Routh1885}, Bochner \citep{Bochner1929} and Brenke
\citep{Brenke1930}, have shown, independently, that polynomial solutions
of (\ref{ODEP}) exist only when $f(x)$, $g(x)$ and $h_{n}(x)$
are polynomials of degree $2$, $1$ and $0$, respectively, where
$h_{n}(x)$ can be different for each $n$. Besides, they showed that
the only polynomial solutions of (\ref{ODEP}), up to a linear transformation,
are the classical orthogonal polynomial sequences of Jacobi, Laguerre
and Hermite, besides other few sequences of polynomials which nevertheless
are not orthogonal on the real line in the usual sense\footnote{The first of such non-orthogonal sequences is constituted by polynomials
of the form $\pi_{n}(x)=x^{n}+\alpha x^{m}$, where $\alpha$ is a
real parameter and $m$ any integer satisfying $0\leqslant m\leqslant n-1$.
The second sequence is related to the so-called \emph{Bessel polynomials,}
which were consistently studied for the first time in \citep{KrallFrink1949}.
Finally, the third sequence consists of the Romanovski polynomials
\citep{Romanovski1929}. }. 

In the following, we shall derive the differential equations that
are satisfied by the sequences of the Möbius-transformed rational
functions and polynomials when the original sequence of orthogonal
polynomials is of the classical type. 
\begin{thm}
\label{PropDiff}Let $P=\left\{ P_{n}(x)\right\} _{n=0}^{\infty}$
be orthogonal polynomial sequence on the real line in which each polynomial
$P_{n}(x)$ for $n\geqslant0$ satisfy a second order differential
equation of the form (\ref{ODEP}). Then, each Möbius-transformed
rational function $R_{n}(x)$, as defined by (\ref{Rn}), belonging
to the sequence $R=\left\{ R_{n}(x)\right\} _{n=0}^{\infty}$ will
satisfy a second-order differential equation of the form, 
\begin{equation}
F(x)R_{n}''(x)+G(x)R_{n}'(x)+H_{n}R_{n}(x)=0,\label{ODER}
\end{equation}
where, $F(y)$ and $G(y)$ are polynomials independent of $n$ whose
degree are at most $4$ and $3$, respectively, and $H_{n}$ is a
constant which may depend on $n$. Similarly, each Möbius-transformed
polynomial $Q_{n}(x)$, as defined by (\ref{Qn}), belonging to the
sequence $Q=\left\{ Q_{n}(x)\right\} _{n=0}^{\infty}$ will satisfy
a second-order differential equation of the form, 
\begin{equation}
\mathcal{F}(x)Q_{n}''(x)+\mathcal{G}_{n}(x)Q_{n}'(x)+\mathcal{H}_{n}(x)Q_{n}(x)=0,\label{ODEQ}
\end{equation}
where, now, $\mathcal{F}(x)$ has degree at most $4$ and does not
depend on n, while $\mathcal{G}_{n}(x)$ and $\mathcal{H}_{n}(x)$
are polynomials with degree at most $3$ and $2$, respectively, that
in general depend on $n$.
\end{thm}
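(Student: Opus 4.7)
The plan is to derive the ODE for $R_n$ first, by transporting the given ODE (\ref{ODEP}) for $P_n$ through the substitution $P_n(x)=R_n(W(x))$, and then to derive the ODE for $Q_n$ from the one for $R_n$ using $R_n(x)=Q_n(x)/(cx+d)^n$.

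For the first step, I would apply the chain rule to $P_n(x)=R_n(W(x))$, obtaining $P_n'(x)=R_n'(W(x))W'(x)$ and $P_n''(x)=R_n''(W(x))\bigl(W'(x)\bigr)^2+R_n'(W(x))W''(x)$. Substituting these into (\ref{ODEP}) and then performing the change of variable $x=M(y)$ (so $W(x)=y$), and using the identities $W'(M(y))=(cy+d)^2/\varDelta$ and $W''(M(y))=2c(cy+d)^3/\varDelta^2$ that follow directly from (\ref{Wx}), I multiply through by $\varDelta^2$ to clear denominators. The result is
\begin{equation}
f(M(y))(cy+d)^4\,R_n''(y)+\Bigl[2c\,f(M(y))(cy+d)^3+\varDelta\,g(M(y))(cy+d)^2\Bigr]R_n'(y)+\varDelta^2 h_n\,R_n(y)=0.
\end{equation}
Writing $f(x)=f_2x^2+f_1x+f_0$ and $g(x)=g_1x+g_0$, direct expansion shows that $f(M(y))(cy+d)^2$ and $g(M(y))(cy+d)$ are polynomials of degree at most $2$ and $1$, respectively. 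Hence the coefficient of $R_n''$ has degree at most $4$ and that of $R_n'$ has degree at most $3$, which establishes (\ref{ODER}) with the advertised degree bounds and $H_n=\varDelta^2 h_n$.

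For the second step, I would set $R_n(y)=Q_n(y)/(cy+d)^n$ and compute
\begin{equation}
R_n'=\frac{(cy+d)Q_n'-ncQ_n}{(cy+d)^{n+1}},\qquad R_n''=\frac{(cy+d)^2Q_n''-2nc(cy+d)Q_n'+n(n+1)c^2Q_n}{(cy+d)^{n+2}}.
\end{equation}
Inserting these expressions into (\ref{ODER}) and multiplying through by $(cy+d)^{n+2}$ yields an identity in which the coefficients of $Q_n''$, $Q_n'$ and $Q_n$ have a priori degrees at most $6$, $5$ and $4$, respectively. The key observation -- and the delicate point of the proof -- is that the coefficients $F(y)$ and $G(y)$ produced in the first step already carry a factor $(cy+d)^2$ and $(cy+d)$, respectively; more precisely, $F(y)=(cy+d)^2\tilde F(y)$ with $\deg\tilde F\leqslant 2$, and $G(y)=(cy+d)\tilde G(y)$ with $\deg\tilde G\leqslant 2$. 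Using these factorisations, every term of the $Q_n$-equation carries an overall factor $(cy+d)^2$, which can be cancelled. Reading off what remains gives
\begin{equation}
\mathcal{F}(y)=(cy+d)^2\tilde F(y),\quad \mathcal{G}_n(y)=(cy+d)\bigl[\tilde G(y)-2nc\tilde F(y)\bigr],\quad \mathcal{H}_n(y)=n(n+1)c^2\tilde F(y)-nc\tilde G(y)+H_n,
\end{equation}
which are polynomials of degrees at most $4$, $3$ and $2$, respectively, exactly as claimed in (\ref{ODEQ}).

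The main obstacle is the bookkeeping in this final cancellation: one must carefully track the powers of $(cy+d)$ distributed among the three coefficients before and after substituting $R_n=Q_n/(cy+d)^n$, as the naive degree count produces a polynomial coefficient of degree $6$ for $Q_n''$, and only the systematic factoring $F=(cy+d)^2\tilde F$, $G=(cy+d)\tilde G$ coming from the structure of the Möbius-transformed rational function reveals that the equation can be divided by $(cy+d)^2$ to bring $\mathcal{F}$ down to degree $4$. Aside from that, the argument is a direct chain-rule computation and no further properties of the classical families (beyond the existence of the ODE (\ref{ODEP}) with $\deg f\leqslant 2$, $\deg g\leqslant 1$, $\deg h_n\leqslant 0$) are needed.
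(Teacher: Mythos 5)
Your proposal is correct and follows essentially the same route as the paper: chain rule on $P_n(x)=R_n(W(x))$, the change of variable $x=M(y)$ with $W'(M(y))=(cy+d)^2/\varDelta$ and $W''(M(y))=2c(cy+d)^3/\varDelta^2$, the observation that $(cx+d)^2f(M(x))$ and $(cx+d)g(M(x))$ are polynomials of degree at most $2$ and $1$, and then the substitution $R_n=Q_n/(cy+d)^n$ with the same cancellation of powers of $(cy+d)$ (the paper multiplies by $(cy+d)^n$ rather than $(cy+d)^{n+2}$ followed by dividing out $(cy+d)^2$, and keeps $H_n=h_n$ rather than $\varDelta^2h_n$, but these are the same computation up to an overall rescaling). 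Your final formulas for $\mathcal{F}$, $\mathcal{G}_n$, $\mathcal{H}_n$ agree with the paper's equations (\ref{FF})--(\ref{HH}).
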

\begin{proof}
Inserting relation (\ref{Pn}) on the differential equation (\ref{ODEP}),
we get, 
\begin{equation}
f(x)\left[\frac{\mathrm{d}^{2}}{\mathrm{d}x^{2}}R_{n}(W(x))\right]+g(x)\left[\frac{\mathrm{d}}{\mathrm{d}x}R_{n}(W(x))\right]+h_{n}R_{n}(W(x))=0,
\end{equation}
 Computing the derivatives with the help of chain's rule, we get,
\begin{equation}
f(x)\left[\frac{\mathrm{d}^{2}W(x)}{\mathrm{d}x^{2}}\frac{\mathrm{d}R_{n}(W(x))}{\mathrm{d}W(x)}+\left(\frac{\mathrm{d}W(x)}{\mathrm{d}x}\right)^{2}\frac{\mathrm{d}^{2}R_{n}(W(x))}{\mathrm{d}W(x)^{2}}\right]+g(x)\left[\frac{\mathrm{d}W(x)}{\mathrm{d}x}\frac{\mathrm{d}R_{n}(W(x))}{\mathrm{d}W(x)}\right]+h_{n}R_{n}(W(x))=0,
\end{equation}
Now, the change of variables $x=M(y)$ provides,
\begin{multline}
f(M(y))\left[\frac{\mathrm{d}^{2}W(M(y))}{\mathrm{d}M(y)^{2}}\frac{\mathrm{d}R_{n}(y)}{\mathrm{d}y}+\left(\frac{\mathrm{d}W(M(y))}{\mathrm{d}M(y)}\right)^{2}\frac{\mathrm{d}^{2}R_{n}(y)}{\mathrm{d}y^{2}}\right]\\
+g(M(y))\left[\frac{\mathrm{d}W(M(y))}{\mathrm{d}M(y)}\frac{\mathrm{d}R_{n}(y)}{\mathrm{d}y}\right]+h_{n}R_{n}(y)=0.
\end{multline}
But, 
\begin{equation}
\frac{\mathrm{d}W(M(y))}{\mathrm{d}M(y)}=\frac{(cy+d)^{2}}{\varDelta},\qquad\frac{\mathrm{d}^{2}W(M(y))}{\mathrm{d}M(y)^{2}}=\frac{2c(cy+d)^{3}}{\varDelta^{2}},\label{DWM}
\end{equation}
so that we get, after gathering the derivatives of $R_{n}(y)$,
\begin{equation}
\frac{(cy+d)^{4}}{\varDelta^{2}}f(M(y))\frac{\mathrm{d}^{2}R_{n}(y)}{\mathrm{d}y^{2}}+\left[\frac{2c(cy+d)^{3}}{\varDelta^{2}}f(M(y))+\frac{(cy+d)^{2}}{\varDelta}g(M(y))\right]\frac{\mathrm{d}R_{n}(y)}{\mathrm{d}y}+h_{n}R_{n}(y)=0,
\end{equation}
which is of the same form of (\ref{ODER}), provided we define, 
\begin{equation}
F(y)=\frac{(cy+d)^{4}}{\varDelta^{2}}f(M(y)),\qquad G(y)=\frac{2c(cy+d)^{3}}{\varDelta^{2}}f(M(y))+\frac{(cy+d)^{2}}{\varDelta}g(M(y)),\qquad H_{n}=h_{n}.\label{RFGH}
\end{equation}
Now, because $f(x)$ and $g(x)$ are polynomials with degree at most
$2$ and $1$ respectively, it is convenient to introduce the polynomials
\begin{equation}
\varPhi(x)=(cx+d)^{2}f(M(x)),\qquad\text{and}\qquad\varGamma(x)=(cx+d)g(M(x)),\label{PhiGamma}
\end{equation}
which are, as a matter of a fact, the Möbius-transformed polynomials
of $f(x)$ and $g(x)$, respectively. Notice that $\varPhi(x)$ and
$\varGamma(x)$ have the same degree as $f(x)$ and $g(x)$, from
which we conclude that $F(y)$ and $G(y)$ are polynomials with degree
at most $4$ and $3$, respectively. 

Let us now prove (\ref{ODEQ}). For this we can proceed as follows:
by inserting (\ref{Qn}) into (\ref{ODER}) we get, 
\begin{equation}
F(x)\frac{\mathrm{d}^{2}}{\mathrm{d}x^{2}}\left[\frac{Q_{n}(x)}{\left(cx+d\right)^{n}}\right]+G(x)\frac{\mathrm{d}}{\mathrm{d}x}\left[\frac{Q_{n}(x)}{\left(cx+d\right)^{n}}\right]+H_{n}\left[\frac{Q_{n}(x)}{\left(cx+d\right)^{n}}\right]=0.
\end{equation}
 Computing the derivatives and simplifying, this becomes,
\begin{equation}
F(x)\left[\frac{c^{2}n(n+1)}{\left(cx+d\right)^{2}}Q_{n}(x)-\frac{2cn}{cx+d}Q_{n}'(x)+Q_{n}''(x)\right]+G(x)\left[Q_{n}'(x)-\frac{cn}{cx+d}Q_{n}(x)\right]+H_{n}Q_{n}(x)=0,
\end{equation}
that is, gathering the derivatives of $Q_{n}(x)$, 
\begin{equation}
F(x)Q_{n}''(x)+\left[G(x)-\frac{2cn}{cx+d}F(x)\right]Q_{n}'(x)+\left[\frac{c^{2}n(n+1)}{\left(cx+d\right)^{2}}F(x)-\frac{cn}{cx+d}G(x)+H_{n}\right]Q_{n}(x)=0,
\end{equation}
 which is of the same form than (\ref{ODEQ}), provided we define,
\begin{align}
\mathcal{F}(x) & =F(x)=\frac{(cx+d)^{4}}{\varDelta^{2}}f(M(x)),\label{FF}\\
\mathcal{G}_{n}(x) & =G(x)-\frac{2cn}{cx+d}F(x)=-\frac{2c(n-1)(cx+d)^{3}}{\varDelta^{2}}f(M(x))+\frac{(cx+d)^{2}}{\varDelta}g(M(x)),\label{GG}\\
\mathcal{H}_{n}(x) & =\frac{c^{2}n(n+1)}{\left(cx+d\right)^{2}}F(x)-\frac{cn}{cx+d}G(x)+H_{n}=\frac{c^{2}n(n-1)(cx+d)^{2}}{\varDelta^{2}}f(M(x))-\frac{cn(cx+d)}{\varDelta}g(M(x))+h_{n}.\label{HH}
\end{align}
 Notice that in terms of the Möbius-transformed polynomials $\varPhi(x)$
and $\varGamma(x)$ given by (\ref{PhiGamma}), we can rewritten (\ref{FF}),
(\ref{GG}) and (\ref{HH}) as, 
\begin{align}
\mathcal{F}(x) & =\frac{(cx+d)^{2}}{\varDelta^{2}}\varPhi(x),\\
\mathcal{G}_{n}(x) & =-\frac{2c(n-1)(cx+d)}{\varDelta^{2}}\varPhi(x)+\frac{(cx+d)}{\varDelta}\varGamma(x),\\
\mathcal{H}_{n}(x) & =\frac{c^{2}n(n-1)}{\varDelta^{2}}\varPhi(x)-\frac{cn}{\varDelta}\varGamma(x)+h_{n},
\end{align}
which proves that $\mathcal{F}(x)$, $\mathcal{G}_{n}(x)$ and $\mathcal{H}_{n}(x)$
are polynomials with degree at most $4$, $3$ and $2$, respectively.
This concludes the proofs.
\end{proof}
At first glance, Theorem \ref{PropDiff} seems to contradicts Routh-Bochner-Brenke's
Theorem because we found polynomial solutions of a second-order differential
equation whose coefficients are polynomials of degree greater than
$2$. However, this contradiction is only apparent because of the
following reason: while Routh-Bochner-Brenke's theorem concerns with
the solutions of the second-order differential equation (\ref{ODEP}),
where only constant term $h_{n}$ is supposed to be dependent of $n$,
both the coefficients $G_{n}(x)$ and $H_{n}(x)$ in the differential
equation (\ref{ODEQ}) can depend on $n$, so that (\ref{ODEP}) and
(\ref{ODEQ}) are actually of different forms. Furthermore, it is
clear that no restriction whatsoever can be imposed on the degrees
of the polynomial coefficients of a second-order differential equation
when all these coefficients are allowed to depend on $n$. 

\subsection{Weight function differential equations}

The weight function $w(x)$ of any classical orthogonal polynomial
also satisfy a differential equation. In this case, however, we have
the following first-order homogeneous differential equation \citep{Szego1939,Chihara2011}:
\begin{equation}
\frac{w'(x)}{w(x)}=\frac{g(x)-f'(x)}{f(x)},\label{PearsonP}
\end{equation}
where $f(x)$ and $g(x)$ are the same polynomials appearing in (\ref{ODEP}).
In fact, (\ref{PearsonP}) is obtained as a condition to the differential
equation (\ref{ODEP}) be written in the self-adjoint form.
\begin{equation}
\frac{\mathrm{d}}{\mathrm{d}x}\left[p(x)\frac{\mathrm{d}y(x)}{\mathrm{d}x}\right]+q_{n}(x)y(x)=0.\label{SLDE}
\end{equation}
It is well-known that any second-order differential equation can be
written in the self-adjoint form by multiplying it by some integrating
factor $w(x)$. In fact, multiplying (\ref{ODEP}) by $w(x)$ and
comparing with (\ref{SLDE}) we conclude that $p(x)=w(x)f(x)$, $p'(x)=w(x)g(x)$
and $q_{n}(x)=w(x)h(x)$, so that the relation $w(x)g(x)=\left[w(x)f(x)\right]'$
must hold, from which (\ref{PearsonP}) immediately follows. Besides,
whenever we have
\begin{equation}
\lim_{x\rightarrow a}f(x)w(x)x^{n}=0,\qquad\text{and}\qquad\lim_{x\rightarrow b}f(x)w(x)x^{n}=0,\qquad n\in\mathbb{N},\label{Lim}
\end{equation}
Sturm-Liouville theory will ensure that the sequence $P=\left\{ P_{n}(x)\right\} _{n=0}^{\infty}$
of polynomials (which we suppose to be the solutions of a well-behaved
Sturm-Liouville problem), will indeed be an orthogonal polynomial
sequence in the interval $I=\left(a,b\right)$ of the real line, with
respect to the weight function $w(x)$.

In this section, we shall show that weight functions of the Möbius-transformed
rational functions and polynomials also satisfy similar first-order
differential equations. 
\begin{thm}
\label{PropW}Let $w(x)$ be the weight function of a classical orthogonal
polynomial sequence $P=\left\{ P_{n}(x)\right\} _{n=0}^{\infty}$,
so that it satisfies the homogeneous first-order differential equation
(\ref{PearsonP}). Then, the corresponding weight function $\omega(x)$,
given by (\ref{omega}), of the sequence $R=\left\{ R_{n}(x)\right\} _{n=0}^{\infty}$
of Möbius-transformed rational functions, as defined in (\ref{Rn}),
will satisfy the following first-order differential equation: 
\begin{equation}
\frac{\omega'(x)}{\omega(x)}=\frac{G(x)-F'(x)}{F(x)},\label{PearsonR}
\end{equation}
where $F(x)$ and $G(x)$ are the same coefficients that appear in
the differential equation (\ref{ODER}). Similarly, the degree-dependent
weight function $\omega_{m,n}(x)$ given in (\ref{OCQ}), that is
associated with the sequence $Q=\left\{ Q_{n}(x)\right\} _{n=0}^{\infty}$
of Möbius-transformed polynomials, as defined in (\ref{Qn}), will
satisfy the following first-order differential equation: 
\begin{equation}
\frac{\omega_{m,n}'(x)}{\omega_{m,n}(x)}=\frac{1}{\mathcal{F}(x)}\left[\frac{\mathcal{G}_{m}(x)+\mathcal{G}_{n}(x)}{2}-\mathcal{F}'(x)\right],\label{PearsonQ}
\end{equation}
where $\mathcal{F}(x)$, $\mathcal{G}_{m}(x)$ and $\mathcal{G}_{n}(x)$
are the same polynomials defined in (\ref{FF}) and (\ref{GG}), respectively.
\end{thm}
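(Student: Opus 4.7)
The plan is to prove both Pearson-type equations by direct logarithmic differentiation, leveraging the explicit formulas for $F(x)$, $G(x)$, $\mathcal{F}(x)$ and $\mathcal{G}_n(x)$ that were derived in Theorem \ref{PropDiff}. The structural reason the result should hold is that the self-adjoint form of the original ODE \eqref{ODEP} is preserved (up to a linear change of the dependent variable by a power of $cx+d$) under the transformations \eqref{Rn}--\eqref{Qn}, and the Pearson equation is precisely the integrability condition for the second-order ODE to admit a self-adjoint form.

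For the first assertion, I would start from $\omega(x)=\Delta\, w(M(x))/(cx+d)^{2}$ and take the logarithmic derivative, which yields
\begin{equation}
\frac{\omega'(x)}{\omega(x)}=\frac{w'(M(x))}{w(M(x))}M'(x)-\frac{2c}{cx+d}=\frac{\Delta}{(cx+d)^{2}}\frac{g(M(x))-f'(M(x))}{f(M(x))}-\frac{2c}{cx+d},
\end{equation}
using $M'(x)=\Delta/(cx+d)^{2}$ together with the Pearson equation \eqref{PearsonP}. On the other side, using the definitions
$F(x)=(cx+d)^{4}f(M(x))/\Delta^{2}$ and $G(x)=2c(cx+d)^{3}f(M(x))/\Delta^{2}+(cx+d)^{2}g(M(x))/\Delta$ from \eqref{RFGH}, I would compute $F'(x)$ by the product and chain rules (noting again that $M'(x)=\Delta/(cx+d)^{2}$ neatly cancels one factor of $(cx+d)^{2}$), and then verify that $(G(x)-F'(x))/F(x)$ reduces to exactly the same expression. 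This is a routine algebraic check once the factors of $(cx+d)$ and $\Delta$ are tracked carefully.

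For the second assertion, the key observation is that $\omega_{m,n}(x)=\omega(x)(cx+d)^{-(m+n)}$, whence
\begin{equation}
\frac{\omega_{m,n}'(x)}{\omega_{m,n}(x)}=\frac{\omega'(x)}{\omega(x)}-\frac{c(m+n)}{cx+d}.
\end{equation}
From \eqref{GG} one reads off $\mathcal{G}_{n}(x)=G(x)-\frac{2cn}{cx+d}F(x)$, so averaging gives $\tfrac{1}{2}(\mathcal{G}_{m}(x)+\mathcal{G}_{n}(x))=G(x)-\frac{c(m+n)}{cx+d}F(x)$. Since $\mathcal{F}(x)=F(x)$, dividing through by $\mathcal{F}(x)$ produces
\begin{equation}
\frac{\tfrac{1}{2}(\mathcal{G}_{m}(x)+\mathcal{G}_{n}(x))-\mathcal{F}'(x)}{\mathcal{F}(x)}=\frac{G(x)-F'(x)}{F(x)}-\frac{c(m+n)}{cx+d},
\end{equation}
and combining with \eqref{PearsonR} (already established in the first part) yields \eqref{PearsonQ} at once.

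I do not anticipate a genuine obstacle: the whole argument is essentially bookkeeping of chain-rule factors. The only mild subtlety is organising the computation of $F'(x)$ so that the cross term $4c(cx+d)^{3}f(M(x))/\Delta^{2}$ (from differentiating the $(cx+d)^{4}$ factor) combines correctly with the $2c(cx+d)^{3}f(M(x))/\Delta^{2}$ inside $G(x)$ to produce the $-2c/(cx+d)$ that matches the logarithmic derivative of $(cx+d)^{-2}$. Once that cancellation is recorded, the second part is immediate from the first and requires no further computation with $w$ or $f$.
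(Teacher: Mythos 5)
Your proposal is correct and follows essentially the same route as the paper: both parts reduce to chain-rule bookkeeping with the explicit formulas (\ref{RFGH}) and (\ref{GG}), and your second part (absorbing the $c(m+n)/(cx+d)$ term into the average $\tfrac12(\mathcal{G}_m+\mathcal{G}_n)$) is identical to the paper's. The only difference is presentational: you take the logarithmic derivative of $\omega(x)=\varDelta\,w(M(x))/(cx+d)^2$ directly, whereas the paper expresses $w$ in terms of $\omega\circ W$, substitutes into (\ref{PearsonP}), and then changes variables $x=M(y)$ --- your direction is arguably the cleaner of the two.
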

\begin{proof}
Inverting relation (\ref{omega}), we can express $w(x)$ in terms
of $\omega(W(x))$: 
\begin{equation}
w(x)=\frac{\omega(W(x))}{\left(\frac{\mathrm{d}M(W(x))}{\mathrm{d}W(x)}\right)}.\label{PearsonW}
\end{equation}
Thereby, taking the derivative of $w(x)$, we obtain, 
\begin{equation}
w'(x)=\frac{\mathrm{d}\omega(W(x))}{\mathrm{d}W(x)}\frac{W'(x)}{\left(\frac{\mathrm{d}M(W(x))}{\mathrm{d}W(x)}\right)}-\omega(W(x))W'(x)\frac{\left(\frac{\mathrm{d}^{2}M(W(x))}{\mathrm{d}W(x)^{2}}\right)}{\left(\frac{\mathrm{d}M(W(x))}{\mathrm{d}W(x)}\right)^{2}}.\label{PearsonWPrime}
\end{equation}
Inserting (\ref{PearsonW}) and (\ref{PearsonWPrime}) into (\ref{PearsonP})
we shall get, after simplification, 
\begin{equation}
w'(x)=\left[\frac{\left(\frac{\mathrm{d}\omega(W(x))}{\mathrm{d}W(x)}\right)}{\omega(W(x))}-\frac{\left(\frac{\mathrm{d}^{2}M(W(x))}{\mathrm{d}W(x)^{2}}\right)}{\left(\frac{\mathrm{d}M(W(x))}{\mathrm{d}W(x)}\right)}\right]W'(x)=\frac{g(x)-f'(x)}{f(x)}.
\end{equation}
Now, making the change of variables $x=M(y)$, we shall find, 
\begin{equation}
\left\{ \frac{\omega'(y)}{\omega(y)}-\frac{M''(y)}{M'(y)}\right\} \frac{\mathrm{d}W(M(y))}{\mathrm{d}M(y)}=\frac{g(M(y))-\left(\frac{\mathrm{d}f(M(y))}{\mathrm{d}M(y)}\right)}{f(M(y))},
\end{equation}
 so that, 
\begin{equation}
\frac{\omega'(y)}{\omega(y)}=\left[\frac{g(M(y))-\left(\frac{\mathrm{d}f(M(y))}{\mathrm{d}M(y)}\right)}{f(M(y))}\right]M'(y)+\frac{M''(y)}{M'(y)}.
\end{equation}
Now, using the relations 
\[
M'(x)=\frac{\varDelta}{(cy+d)^{2}},\qquad M''(x)=-\frac{2c\varDelta}{(cy+d)^{3}},\qquad\text{and}\qquad\frac{\mathrm{d}f(M(y))}{\mathrm{d}M(y)}=\frac{1}{M'(y)}\frac{\mathrm{d}f(M(y))}{\mathrm{d}y},
\]
we can verify, taking (\ref{RFGH}) into account, that 
\begin{equation}
\left[\frac{g(M(y))-\left(\frac{\mathrm{d}f(M(y))}{\mathrm{d}M(y)}\right)}{f(M(y))}\right]M'(y)+\frac{M''(y)}{M'(y)}=\frac{G(y)-F'(y)}{F(y)},
\end{equation}
which establishes (\ref{PearsonR}). Finally, to prove (\ref{PearsonQ})
we can proceed as follows: from (\ref{OCQ}), we know that $\omega(x)=\omega_{m,n}(x)(cy+d)^{n+n}$.
Thus, inserting this expression into (\ref{PearsonR}) and computing
the derivative, we obtain, 
\begin{equation}
\frac{\omega_{m,n}'(x)}{\omega_{m,n}(x)}+\frac{c\left(m+n\right)}{(cx+d)}=\frac{G(x)-F'(x)}{F(x)}.
\end{equation}
Now, from (\ref{FF}) we get that $F(x)=\mathcal{F}(x)$ and, hence,
$F'(x)=\mathcal{F}'(x)$ as well; besides, from (\ref{GG}) we can
see that the term $c\left(m+n\right)/(cx+d)$ is exactly canceled
by expressing $G(x)$ in terms of $\frac{1}{2}\left[\mathcal{G}_{m}(x)+\mathcal{G}_{n}(x)\right]$,
from which (\ref{PearsonQ}) is obtained. This conclude the proof.
\end{proof}

\subsection{Rodrigues' type formulas}

Another interesting property of the classical orthogonal polynomial
sequences is that they can be defined through the so-called Rodrigues'
formulas \citep{Szego1939,Chihara2011}. In fact, if $P=\left\{ P_{n}(x)\right\} _{n=0}^{\infty}$
is such a sequence, then the polynomials $P_{n}(x)\in P$ can be found
by taking derivatives: 
\begin{equation}
P_{n}(x)=\frac{\epsilon_{n}}{w(x)}\frac{\mathrm{d}^{n}}{\mathrm{d}x^{n}}\left[f(x)^{n}w(x)\right],\label{RodP}
\end{equation}
 where $\epsilon_{n}$ are some standardizing constants, $w(x)$ the
corresponding weight function and $f(x)$ the same polynomial appearing
in the differential equation (\ref{ODEP}).

In this section, we shall show that the Möbius-transformed rational
functions and polynomials also can be found by taking derivatives
through a Rodrigues'-type formula.
\begin{thm}
Let $P=\left\{ P_{n}(x)\right\} _{n=0}^{\infty}$ be a sequence of
orthogonal polynomials on the real line satisfying a Rodrigues' type
formula as given in (\ref{RodP}). Then, the rational functions $R_{n}(x)$,
as defined in (\ref{Rn}), and the Möbius-transformed polynomials
$Q_{n}(x)$, as defined in (\ref{Qn}), will satisfy, respectively,
the following Rodrigues' type formulas: 
\begin{equation}
R_{n}(y)=\frac{\varDelta^{2n}}{\left(cx+d\right)^{2}}\frac{\epsilon_{n}}{\omega(x)}D_{x}^{n}\left[\frac{F(x)^{n}}{\left(cx+d\right)^{4n-2}}\omega(x)\right],\label{RodR}
\end{equation}
 and 
\begin{equation}
Q_{n}(y)=\frac{\varDelta^{2n}}{\left(cx+d\right)^{n+2}}\frac{\epsilon_{n}}{\omega_{n,n}(x)}D_{x}^{n}\left[\frac{\mathcal{F}(x)^{n}}{\left(cx+d\right)^{2n-2}}\omega_{n,n}(x)\right],\label{RodQ}
\end{equation}
where $F(x)=\mathcal{F}(x)$ is the same function defined in (\ref{RFGH}),
$\omega(x)$ and $\omega_{n,n}(x)$ are the new weight functions defined
in (\ref{omega}) and (\ref{OCQ}), respectively, and we introduced
the differential operator, 
\begin{equation}
D_{x}=\frac{1}{M'(x)}\frac{\mathrm{d}}{\mathrm{d}x}=\frac{\left(cx+d\right)^{2}}{\varDelta}\frac{\mathrm{d}}{\mathrm{d}x}.
\end{equation}
\end{thm}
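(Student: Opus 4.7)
The plan is to start with the classical Rodrigues formula for $P_n(x)$ given in (\ref{RodP}) and perform the substitution $x = M(y)$, converting everything into objects associated with the Möbius-transformed sequence. The key identity underlying the whole argument is the chain-rule consequence that if $h$ is any sufficiently differentiable function, then
\begin{equation*}
\left.\frac{d^{n}h}{dx^{n}}\right|_{x=M(y)} = D_{y}^{n}\,h(M(y)),
\end{equation*}
where $D_y = \frac{1}{M'(y)}\frac{d}{dy}$ is exactly the operator introduced in the theorem. This follows by induction: the $k=1$ case is the ordinary chain rule written as $\frac{dh}{dx}(M(y)) = \frac{1}{M'(y)} \frac{d}{dy} h(M(y))$, and the inductive step applies the same identity to $\frac{d^{k-1}h}{dx^{k-1}}$ in place of $h$.

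Granted that identity, I would substitute $x = M(y)$ directly in (\ref{RodP}) to obtain
\begin{equation*}
R_n(y) = P_n(M(y)) = \frac{\epsilon_n}{w(M(y))}\,D_y^{\,n}\bigl[f(M(y))^n\,w(M(y))\bigr].
\end{equation*}
The next step is to rewrite every ingredient in terms of the Möbius-transformed objects already defined in the paper. From (\ref{omega}) we have $w(M(y)) = \omega(y)/M'(y) = \omega(y)(cy+d)^2/\Delta$; from the relation $F(y) = (cy+d)^4 f(M(y))/\Delta^2$ established in (\ref{RFGH}), we obtain $f(M(y))^n = \Delta^{2n} F(y)^n/(cy+d)^{4n}$. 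Plugging these into the displayed expression and pulling out the scalar powers of $\Delta$ and $(cy+d)$ produces exactly (\ref{RodR}); this part is a routine bookkeeping of factors.

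For formula (\ref{RodQ}), I would simply use the definition $Q_n(y) = (cy+d)^n R_n(y)$ together with the identity $\omega_{n,n}(y) = \omega(y)/(cy+d)^{2n}$ coming from (\ref{OCQ}). Multiplying (\ref{RodR}) by $(cy+d)^n$ and absorbing the factors $(cy+d)^{2n}$ appropriately inside the argument of $D_y^n$ and in the prefactor converts $F$ to $\mathcal{F}$ (they coincide by (\ref{FF})) and $\omega$ to $\omega_{n,n}$, yielding the stated form. The main conceptual point — and the only step that is not purely symbolic — is establishing the iterated chain-rule identity for $D_y^n$; once this is in place, the rest is a careful reorganization of the powers of $cy+d$ and $\Delta$, which, although lengthy, presents no essential obstacle.
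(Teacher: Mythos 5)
Your proposal is correct and follows essentially the same route as the paper, which simply states that the formulas follow by the change of variable $x=M(y)$ in (\ref{RodP}) together with (\ref{Rn}), (\ref{Qn}), (\ref{omega}), (\ref{OCQ}) and (\ref{RFGH}); you supply the details the paper omits, in particular the iterated chain-rule identity $\left.\tfrac{\mathrm{d}^{n}h}{\mathrm{d}x^{n}}\right|_{x=M(y)}=D_{y}^{n}\,h(M(y))$ and the power counting in $(cy+d)$ and $\varDelta$, both of which check out against (\ref{RodR}) and (\ref{RodQ}).
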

\begin{proof}
Formulas (\ref{RodR}) and (\ref{RodQ}) follow in a straightforward
way by making the change of variable $x=M(y)$ into (\ref{RodP})
and using (\ref{Rn}), (\ref{Qn}), (\ref{omega}), (\ref{OCQ}) and
(\ref{RFGH}). 
\end{proof}
We remark that formulas (\ref{RodR}) and (\ref{RodQ}) can be rewritten
as well in the following equivalent form: 
\begin{align}
R_{n}(y) & =\frac{\varDelta}{\left(cy+d\right)^{2}}\frac{\epsilon_{n}}{\omega(y)}\left.\frac{\mathrm{d}^{n}}{\mathrm{d}x^{n}}\left[f^{n}(x)w(x)\right]\right|_{x=M(y)},\\
Q_{n}(y) & =\frac{\varDelta}{\left(cy+d\right)^{n+2}}\frac{\epsilon_{n}}{\omega_{n,n}(y)}\left.\frac{\mathrm{d}^{n}}{\mathrm{d}x^{n}}\left[f^{n}(x)w(x)\right]\right|_{x=M(y)}.
\end{align}
They can also be rewritten in many other ways --- for example, we
can expand the derivatives by invoking Faà-di-Bruno formula for the
$n$th derivative of a composed function, Leibniz formula for the
derivative of a product, etc ---, however, we shall not go through
these lines because the resulting expressions are very cumbersome.

\subsection{Generating functions}

As a last property, let us discuss the existence of generating functions
for the Möbius-transformed rational functions and polynomials. As
it is well-known, the sequences of classical orthogonal polynomials
can be generated by generating function of the following forms \citep{Szego1939,Chihara2011}:
\begin{equation}
\phi(x,t)=\sum_{k=0}^{\infty}P_{n}(x)t^{n},\qquad\text{or}\qquad\psi(x,t)=\sum_{k=0}^{\infty}P_{n}(x)\frac{t^{n}}{n!}.\label{GFP}
\end{equation}
 Similar formulas hold for the Möbius-transformed rational functions
and polynomials:
\begin{thm}
Let $P=\left\{ P_{k}(x)\right\} _{k=0}^{\infty}$ be a classical orthogonal
polynomial sequence whose generating function is one of those given
in (\ref{GFP}). Then, the corresponding generating functions of the
sequence $R=\left\{ R_{k}(x)\right\} _{k=0}^{\infty}$ of Möbius-transformed
rational functions $R_{n}(x)$ defined in (\ref{Rn}) are the following:
\begin{equation}
\varPhi(y,t)=\phi(M(y),t),\qquad\text{or}\qquad\varPsi(y,t)=\psi(M(y),t),
\end{equation}
respectively. Similarly, the corresponding generating functions of
the sequence $Q=\left\{ Q_{k}(x)\right\} _{k=0}^{\infty}$ of Möbius-transformed
polynomials $Q_{n}(x)$ defined in (\ref{Qn}) are: 
\begin{equation}
\varPhi(y,\tau)=\phi(M(y),(cy+d)\tau),\qquad\text{or}\qquad\varPsi(y,\tau)=\psi(M(y),(cy+d)\tau),
\end{equation}
 respectively.
\end{thm}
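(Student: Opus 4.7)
The plan is to prove each of the four formulas by direct substitution into the known generating functions \eqref{GFP} and comparison of coefficients, using nothing more than the definitions \eqref{Rn} and \eqref{Qn}. Since classical orthogonal polynomials admit genuinely convergent generating functions (at least on suitable domains in $(x,t)$), the identities below hold as formal power series in $t$ or $\tau$, and as analytic identities in the region where the original series converges; no delicate analytic issue arises, so I will work at the formal level and simply note analyticity at the end.

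First I would handle the case of the M\"obius-transformed rational functions $R_n(x)$. By \eqref{Rn} we have $R_n(y)=P_n(M(y))$, so setting $x=M(y)$ in the generating function $\phi(x,t)=\sum_{n\geq 0}P_n(x)t^n$ gives
\begin{equation}
\phi(M(y),t)=\sum_{n=0}^{\infty}P_n(M(y))t^n=\sum_{n=0}^{\infty}R_n(y)t^n,
\end{equation}
which by definition means that $\varPhi(y,t)=\phi(M(y),t)$ is a generating function for the sequence $R$. The argument for $\psi$ is identical: the factorials $1/n!$ are inert under the substitution $x=M(y)$, so $\varPsi(y,t)=\psi(M(y),t)$ generates the $R_n(y)$ in the exponential sense.

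Next I would treat the M\"obius-transformed polynomials $Q_n(x)$. Recall from \eqref{Qn} that $Q_n(y)=(cy+d)^n P_n(M(y))$, so the extra weight $(cy+d)^n$ must be absorbed into the series parameter. Substituting $x=M(y)$ and $t=(cy+d)\tau$ into $\phi$ yields
\begin{equation}
\phi\bigl(M(y),(cy+d)\tau\bigr)=\sum_{n=0}^{\infty}P_n(M(y))\,(cy+d)^n\tau^n=\sum_{n=0}^{\infty}Q_n(y)\,\tau^n,
\end{equation}
which is exactly the claimed generating function identity. The exponential version is entirely analogous: the same substitution in $\psi(x,t)=\sum_n P_n(x)t^n/n!$ gives $\psi(M(y),(cy+d)\tau)=\sum_n Q_n(y)\tau^n/n!$, since the factor $(cy+d)^n$ commutes freely with $1/n!$.

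The only thing that could qualify as an obstacle is the domain of convergence: for fixed $y$ with $cy+d\neq 0$, the ordinary generating function $\varPhi(y,\tau)$ converges whenever $(cy+d)\tau$ lies in the disc of convergence of $\phi(M(y),\cdot)$, which effectively shrinks that disc by a factor $|cy+d|^{-1}$; for the exponential version there is no restriction, since $\psi$ is typically entire in $t$. These caveats do not affect the formal identities, so they may be added as a brief remark after the substitution argument. Apart from these routine convergence comments, the proof is a one-line substitution in each of the four cases.
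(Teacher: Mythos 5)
Your proof is correct and follows essentially the same route as the paper's: a direct substitution $x=M(y)$ (and $t=(cy+d)\tau$ for the polynomial case) into the generating functions, using the definitions of $R_n$ and $Q_n$ to identify the resulting coefficients. The added remark on convergence domains is a harmless extra not present in the paper.
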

\begin{proof}
We have at once that, 
\begin{equation}
\varPhi(y,t)=\phi\left(M(y),t\right)=\sum_{k=0}^{\infty}P_{n}(M(y))t^{n}=\sum_{k=0}^{\infty}R_{n}(y)t^{n},
\end{equation}
 and
\begin{equation}
\varPsi(y,t)=\psi\left(M(y),t\right)=\sum_{k=0}^{\infty}P_{n}(M(y))\frac{t^{n}}{n!}=\sum_{k=0}^{\infty}R_{n}(y)\frac{t^{n}}{n!}.
\end{equation}
In the same manner, we have that, 
\begin{equation}
\varPhi(y,\tau)=\phi\left(M(y),(cy+d)\tau\right)=\sum_{k=0}^{\infty}P_{n}(M(y))(cy+d)^{n}\tau^{n}=\sum_{k=0}^{\infty}Q_{n}(y)\tau^{n},
\end{equation}
 and
\begin{equation}
\varPsi(y,\tau)=\psi\left(M(y),(cy+d)\tau\right)=\sum_{k=0}^{\infty}P_{n}(M(y))(cy+d)^{n}\frac{\tau^{n}}{n!}=\sum_{k=0}^{\infty}Q_{n}(y)\frac{\tau^{n}}{n!}.
\end{equation}
\end{proof}

\section{Applications\label{Section Applications}}

We close this paper with some applications of the theory developed
above. More specifically, we show that the sequences of Hermite, Laguerre,
Jacobi, Bessel and Romanovski polynomials are all related with each
other by a Möbius transformation. We also show that generalized Bessel
polynomials enjoy a finite orthogonality on the real line, as well
as the Romanovski polynomials satisfy an orthogonality relation on
the imaginary axis.

\subsection{From Laguerre to Bessel polynomials}

The \emph{associated Laguerre polynomials} $L_{n}^{\alpha}(x)$ are
polynomials orthogonal on the interval $I=\left(0,\infty\right)$
of the real line with respect to the weight function $w(x)=x^{\alpha}\mathrm{e}^{-x}$,
for any $\alpha>-1$ \citep{Szego1939,Chihara2011}. Performing the
re-scaling transformation $M(x)=\beta x$, we can easily generate
the so-called \emph{generalized Legendre polynomials} $L_{n}^{\left(\alpha,\beta\right)}(x)=L_{n}^{\alpha}(\beta x)$,
which are orthogonal on the same interval $I=\left(0,\infty\right)$
of the real line, for $\alpha>-1$ and $\beta>0$, but with respect
the weight function $w(x)=x^{\alpha}\mathrm{e}^{-\beta x}$. Now let
us consider the generalized Laguerre polynomials under the inversion
map $M(x)=1/x$. The resulting transformed polynomials will have,
therefore, the following form: 
\begin{equation}
\mathcal{L}_{n}^{\left(\alpha,\beta\right)}(x)=x^{n}L_{n}^{\left(\alpha,\beta\right)}(1/x).
\end{equation}
From the results of Section \ref{MTOP}, it follows that these polynomials
are also orthogonal on the interval $I=\left(0,\infty\right)$ but
with respect to a varying measure, provided that $\alpha>-1$ and
$\beta>0$. Namely, we have the following:
\[
\int_{0}^{\infty}\frac{\mathcal{L}_{m}^{\left(\alpha,\beta\right)}(x)\mathcal{L}_{n}^{\left(\alpha,\beta\right)}(x)}{x^{m+n}}\left(\frac{1}{x}\right)^{\alpha+2}\mathrm{e}^{-\beta/x}\mathrm{d}x=\frac{1}{\beta^{\alpha+1}}\frac{\varGamma\left(n+1+\alpha\right)}{\varGamma\left(n+1\right)}\delta_{m,n},\qquad\alpha>-1,\qquad\beta>0.
\]
In the following, we shall show that the polynomials $\mathcal{L}_{n}^{\left(\alpha,\beta\right)}(x)$
are related with the so-called \emph{Bessel polynomials}, which were
studied extensively in \citep{KrallFrink1949}. To this end, notice
that, according to the results of Section \ref{Subsection ODE}, the
polynomials $\mathcal{L}_{n}^{\left(\alpha,\beta\right)}(x)=y(x)$
will satisfy the differential equation: 
\begin{equation}
x^{2}y''(x)+\left[\beta+\left(1-2n-\alpha\right)x\right]y'(x)+n\left(n+\alpha\right)y(x)=0.\label{ODEL}
\end{equation}
Comparing with the differential equation given in for \citep{KrallFrink1949}
the generalized Bessel polynomials $\mathcal{B}_{n}^{\left(\gamma,\beta\right)}$,
namely, 
\begin{equation}
x^{2}y''(x)+\left(\beta+\gamma x\right)y'(x)+n\left(n+\gamma-1\right)y(x)=0,\label{ODEB}
\end{equation}
we see that a complete agreement between (\ref{ODEL}) and (\ref{ODEB})
is achieved when we set $\alpha=1-2n-\gamma$. Thus, the generalized
Bessel polynomials are given by the formula 
\begin{equation}
\mathcal{B}_{n}^{\left(\gamma,\beta\right)}(x)=\mathcal{L}_{n}^{\left(1-2n-\gamma,\beta\right)}(x),
\end{equation}
up to a normalization. Thereby, each polynomial $\mathcal{B}_{n}^{\left(\gamma,\beta\right)}(x)$
is associated with a generalized Laguerre polynomial with a different
value of $\alpha$. From the discussion given in Section \ref{Subsection OR},
we can conclude that the polynomials $\mathcal{B}_{n}^{\left(\gamma,\beta\right)}(x)$
will be satisfy the orthogonality relation
\begin{equation}
\int_{0}^{\infty}\mathcal{B}_{m}^{\left(\gamma,\beta\right)}(x)\mathcal{B}_{n}^{\left(\gamma,\beta\right)}(x)\Omega_{m,n}(x)\mathrm{d}x=\beta^{2n-2+\gamma}\frac{\varGamma\left(2-n-\gamma\right)}{\varGamma\left(n+1\right)}\delta_{m,n},\qquad\Omega_{m,n}(x)=x^{|m-n|+\gamma-3}\mathrm{e}^{-\beta/x},\label{ORBessel}
\end{equation}
provided that $\gamma<2-2\max(m,n)$ and $\beta>0$. From this we
can see that the polynomials $\mathcal{B}_{m}^{\left(\gamma,\beta\right)}(x)$
can at most enjoy a finite orthogonality on the interval $I=\left(0,\infty\right)$
of the real line\footnote{Notice as well that, for $\gamma$ a negative integer, the following
identity holds: $\mathcal{B}_{n}^{\left(\gamma,\beta\right)}(x)=\mathcal{B}_{1-\gamma-n}^{\left(\gamma,\beta\right)}(x)$.
Thus, in these cases the sequence $B=\{\mathcal{B}_{n}^{\left(\gamma,\beta\right)}(x)\}_{n=0}^{\infty}$
of generalized Bessel polynomials will be a defective sequence of
orthogonal polynomials.}. 

\begin{table}[H]
\centering%
\begin{tabular}{ll}
\hline 
Generalized Bessel polynomials & Bessel polynomials\tabularnewline
\hline 
$\mathcal{B}_{0}^{\left(\gamma,\beta\right)}(x)=1$ & $\mathcal{B}_{0}(x)=1$\tabularnewline
$\mathcal{B}_{1}^{\left(\gamma,\beta\right)}(x)=1+\frac{\gamma}{\beta}x$ & $\mathcal{B}_{1}(x)=1+x$\tabularnewline
$\mathcal{B}_{2}^{\left(\gamma,\beta\right)}(x)=1+\frac{2(1+\gamma)}{\beta}x+\frac{(1+\gamma)(2+\gamma)}{\beta^{2}}x^{2}$ & $\mathcal{B}_{2}(x)=1+3x+3x^{2}$\tabularnewline
$\mathcal{B}_{3}^{\left(\gamma,\beta\right)}(x)=1+\frac{3(2+\gamma)}{\beta}x+\frac{3(2+\gamma)(3+\gamma)}{\beta^{2}}x^{2}+\frac{(2+\gamma)(3+\gamma)(4+\gamma)}{\beta^{3}}x^{3}$ & $\mathcal{B}_{3}(x)=1+6x+15x^{2}+15x^{3}$\tabularnewline
\hline 
\end{tabular}

\caption{The first generalized Bessel polynomials and Bessel polynomials (normalized
with the constant terms equal to unity).}
 \label{TableBessel}
\end{table}

Following \citep{KrallFrink1949}, the usual Bessel polynomials are
obtained by setting $\gamma=2$ and $\beta=2$, from which we get
that $B_{n}(x)=\mathcal{L}_{n}^{\left(-1-2n,2\right)}(x)$, up to
a normalization, while the weight function becomes $\Omega_{m,n}(x)=x^{|m-n|-1}\mathrm{e}^{-2/x}$.
In this case, however, the condition for the validity of the orthogonality
relation (\ref{ORBessel}) would hold only for $n<0$, which is not
possible. Thus, we conclude that the Bessel polynomials are not orthogonal
on the real line. The first generalized and usual Bessel polynomials
are presented in Table \ref{TableBessel}.

\subsection{From Jacobi to Romanovski polynomials}

\emph{Romanovski polynomials }consist of an infinite sequence of real
polynomials studied by Romanovski in \citep{Romanovski1929}, although
they have appeared before in Routh's work \citep{Routh1885}. Romanovski
polynomials also appear in physical problems, which justified a more
detailed study of them --- see \citep{Alvarez2007,Raposo2007} and
references therein. In what follows we shall show that Romanovski
polynomials are also related with Jacobi polynomials $J_{n}^{\left(\alpha,\beta\right)}(x)$
via a Möbius transformation. In this case, however, we should consider
the mapping $M(x)=ix$ with the following values for the Möbius parameters:
$a=1$, $b=0$, $c=0$ and $d=-i$. 

Remember that Jacobi polynomials $J_{n}^{\left(\alpha,\beta\right)}(x)$
are orthogonal on the interval $I=(-1,1)$ of the real line with respect
to the weight function $w(x)=\left(1-x\right)^{\alpha}\left(1+x\right)^{\beta}$
for $\alpha>-1$ and $\beta>-1$ \citep{Szego1939,Chihara2011}. Thus,
according with the results of Section \ref{MTOP}, the Möbius-transformed
polynomials 
\begin{equation}
\mathcal{J}_{n}^{\left(\alpha,\beta\right)}(x)=\left(-i\right)^{n}J_{n}^{\left(\alpha,\beta\right)}(ix)
\end{equation}
will be orthogonal on the interval $J=\left(-i,i\right)$ along the
imaginary line, provided that $\alpha>-1$ and $\beta>-1$. The polynomials
$\mathcal{J}_{n}^{\left(\alpha,\beta\right)}(x)$, however, do not
have real coefficients for real $\alpha$ and $\beta$; nonetheless,
a real polynomial can be obtained if we allow $\alpha$ and $\beta$
to assume non-real values. The precise condition for this is that
$\alpha$ be the complex conjugate of $\beta$, so that we can write
$\alpha=\gamma+i\delta$, $\beta=\gamma-i\delta$ with $\gamma,\delta\in\mathbb{R}$
and $\delta\neq0$. In fact, up to a normalization, this gives place
to the \emph{Romanovski polynomials}\footnote{We remark that some authors adopt different values for the parameters
$\gamma$ and $\delta$ in the definition of Romanovski polynomials.}: 
\begin{equation}
\mathcal{R}_{n}^{\left(\gamma,\delta\right)}(x)=\mathcal{J}_{n}^{\left(\gamma+i\delta,\gamma-i\delta\right)}(x)=\left(-i\right)^{n}J_{n}^{\left(\gamma+i\delta,\gamma-i\delta\right)}(ix),
\end{equation}
The first Romanovski polynomials are presented in Table \ref{TableRomanovski}.

Now, in order to investigate the orthogonality of the polynomials
$\mathcal{R}_{n}^{\left(\gamma,\delta\right)}(x)$ along the imaginary
axis we need to verify under what conditions the Jacobi polynomials
with complex conjugate parameters are orthogonal on the real line.
It is not difficult to verify that Jacobi polynomials $J_{n}^{\left(\gamma+i\delta,\gamma-i\delta\right)}(x)$
are orthogonal on the interval $I=(-1,1)$ of the real line provided
that $\gamma>-1$ and $\delta\in\mathbb{R}$. In fact, this follows
because the limit relations (\ref{Lim}) are satisfied for any non-negative
integer $n$ whenever $\gamma>-1$ and $\delta$ any real number.
Besides, we can verify that all the moments $\mu(n)=\int_{-1}^{-1}x^{n}\left(1-x\right)^{\gamma+i\delta}\left(1+x\right)^{\gamma-i\delta}\mathrm{d}x$
exist under these conditions as well, which is enough to guarantee
the convergence of the integrals (\ref{OCP}) for any $m,n\in\mathbb{N}$,
for the same range of $\gamma$ and $\delta$. 

\begin{table}[H]
\centering%
\begin{tabular}{l}
\hline 
Romanovski polynomials\tabularnewline
\hline 
$\mathcal{R}_{0}^{\left(\gamma,\delta\right)}(x)=1$\tabularnewline
$\mathcal{R}_{1}^{\left(\gamma,\delta\right)}(x)=\delta+(1+\gamma)x$\tabularnewline
$\mathcal{R}_{2}^{\left(\gamma,\delta\right)}(x)=\frac{1}{4}\left(2+\gamma+2\delta^{2}\right)+\frac{1}{2}\delta(3+2\gamma)x+\frac{1}{4}(2+\gamma)(3+2\gamma)x^{2}$\tabularnewline
\hline 
\end{tabular}

\caption{The first Romanovski polynomials $\mathcal{R}_{n}^{\left(\gamma,\delta\right)}(x)=\left(-i\right)^{n}J_{n}^{\left(\gamma+i\delta,\gamma-i\delta\right)}(ix)$.}

\label{TableRomanovski}
\end{table}

Consequently, we conclude that Romanovski polynomials $\mathcal{R}_{n}^{\left(\gamma,\delta\right)}(x)$
will be orthogonal on the interval $J=(-i,i)$ of the imaginary axis
whenever $\gamma>-1$ and $\delta$ is a real number\footnote{Notice that the zeros of Romanovski polynomials $\mathcal{R}_{n}^{\left(\gamma,\delta\right)}(x)$
usually do not lie on the imaginary axis, which can be explained by
the fact that the Jacobi polynomials $J_{n}^{\left(\gamma+i\delta,\gamma-i\delta\right)}(x)$
are not real polynomials for $\gamma$ and $\delta$ real, so that
their zeros usually do not lie on the real line either. }. The corresponding weight function can be found either directly from
(\ref{omega}) and (\ref{OCQ}) or by solving the differential equation
(\ref{PearsonQ}). We have that, up to a multiplicative constant, 

\[
\omega(x)=\left(1-ix\right)^{\gamma+i\delta}\left(1+ix\right)^{\gamma-i\delta}=\left(x^{2}+1\right)^{\gamma}\mathrm{e}^{2\delta\arctan(x)}.
\]
Thus, the orthogonality relation reads: 
\begin{equation}
\int_{-i}^{i}\mathcal{R}_{m}^{\left(\gamma,\delta\right)}(x)\mathcal{R}_{n}^{\left(\gamma,\delta\right)}(x)\left(x^{2}+1\right)^{\gamma}\mathrm{e}^{2\delta\arctan(x)}\mathrm{d}x=\frac{2^{2\gamma+1}}{2n+2\gamma+1}\frac{\varGamma(n+1+\gamma+i\delta)\varGamma(n+1+\gamma-i\delta)}{\varGamma(n+1+2\gamma)\varGamma(n+1)}\delta_{m,n}.
\end{equation}
which holds for $\gamma>-1$ and real $\delta$. 

Finally, it should be mentioned that the limit relations (\ref{Lim}),
if applied to this Möbius-transformed weight function $\omega(x)$,
provide as well a finite orthogonality for the Romanovski polynomials
on the real line. In fact, in this case we can verify that only those
polynomials $\mathcal{R}_{m}^{\left(\gamma,\delta\right)}(x)$ and
$\mathcal{R}_{n}^{\left(\gamma,\delta\right)}(x)$ satisfying the
relation $m+n+2\gamma<-1$ will form an orthogonal polynomial sequence
on the interval $J=(-\infty,\infty)$ of the real line. 

\subsection{From Jacobi to Laguerre polynomials}

The Möbius transformation is rich enough to map any finite point to
the infinite --- in fact, on the extended complex plane $\mathbb{C}_{\infty}$
there is no special distinction between the infinite and any other
point. Thereby, we can transform a given finite interval of the real
line to a semi-infinite one. Following this idea, we shall show in
the sequel that Laguerre polynomials can be obtained from Jacobi polynomials
through a Möbius transformation followed by an adequate limit of their
parameters. 

The associated Laguerre polynomials are orthogonal polynomials on
the interval $I=(0,\infty)$ of the real line with respect to the
weight function $w(x)=x^{\alpha}\mathrm{e}^{-x}$ \citep{Szego1939,Chihara2011}.
Thus, let us define the the Möbius transformed polynomials $\mathcal{J}_{n}^{(\alpha,\beta)}(x)=\left(cx+d\right)^{n}J_{n}^{(\alpha,\beta)}\left(M(x)\right)$
and look for the transformation $W(x)$ that maps the points $l=-1$
and $r=1$ respectively to $\lambda=0$ and $\rho=\infty$. From (\ref{Mx}),
we plainly see that we should have $c=a$ and $d=-b$ to this end.
With this, we have that the first Möbius-transformed polynomial becomes:
\[
\mathcal{J}_{1}^{(\alpha,\beta)}(x)=a(\alpha+1)x+b(\beta+1).
\]
 Comparing this expression with the first Laguerre polynomial, $L_{1}^{\beta}(x)=\beta+1-x$,
we see that we must have $a=-1/\left(\alpha+1\right)$ and $b=1$.
Then we can verify that the other Möbius transformed polynomials $\mathcal{J}_{n}^{(\alpha,\beta)}(x)$
will match with the Laguerre polynomials $L_{n}^{\beta}(x)$ after
we take the limit $\alpha\rightarrow\infty$, that is, 
\begin{equation}
L_{n}^{\beta}(x)=\lim_{\alpha\rightarrow\infty}\left(\frac{x+\alpha+1}{\alpha+1}\right)^{n}\left(-1\right)^{n}J_{n}^{(\alpha,\beta)}\left(\frac{x-\alpha-1}{x+\alpha+1}\right)=\lim_{\alpha\rightarrow\infty}\left(-1\right)^{n}J_{n}^{(\alpha,\beta)}\left(\frac{x-\alpha-1}{x+\alpha+1}\right).
\end{equation}
 Using the identity $J_{n}^{(\beta,\alpha)}(x)=\left(-1\right)^{n}J_{n}^{(\alpha,\beta)}(-x)$
we can also write: 
\begin{equation}
L_{n}^{\alpha}(x)=\lim_{\beta\rightarrow\infty}\left(\frac{x+\beta+1}{\beta+1}\right)^{n}J_{n}^{(\alpha,\beta)}\left(\frac{\beta+1-x}{\beta+1+x}\right)=\lim_{\beta\rightarrow\infty}J_{n}^{(\alpha,\beta)}\left(\frac{\beta+1-x}{\beta+1+x}\right).
\end{equation}
 These expressions can be compared with Szeg\H{o}'s identity \citep{Szego1939}:
$L_{n}^{\alpha}(x)=\lim_{\beta\rightarrow\infty}J_{n}^{(\alpha,\beta)}\left(1-2\beta^{-1}x\right)$
. 

Finally, let us show that Laguerre's weight function can also be obtained
from Jacobi's weight function through the same procedure. Notice that
$\varOmega_{m,n}(x)\rightarrow\omega(x)$ as $\alpha\rightarrow\infty$
because $c\rightarrow0$ in this limit. Thus, is enough to prove that
$w(x)=\left(1-x\right)^{\alpha}\left(1+x\right)^{\beta}$ reduces
to $\omega(x)=x^{\beta}\mathrm{e}^{-x}$ in the limit $\alpha\rightarrow\infty$
after we perform the Möbius transformation above. From (\ref{PearsonR})
we can verify that the transformed weight function is, 
\begin{equation}
\omega(x)=x^{\beta}\left(x+\alpha+1\right)^{-\alpha-\beta-2}\kappa(\alpha,\beta),
\end{equation}
where $\kappa(\alpha,\beta)$ is the constant of integration. From
this we can see that Laguerre's weight function is obtained after
we set $\kappa(\alpha,\beta)=\alpha^{\beta+1}(\alpha+1)^{\alpha+1}$
and take the limit $\alpha\rightarrow\infty$. All the other properties
of Laguerre polynomials can be obtained from those of Jacobi by similar
arguments. 

\subsection{From Jacobi to Hermite polynomials }

Finally, we can show that Hermite polynomials can also be obtained
from Jacobi polynomials through a Möbius transformation followed by
a specific limit. At first sight, we would be tempted to map the finite
interval $I=(-1,1)$ into the infinite interval $J=(-\infty,\infty)$.
This, however, it is not possible to be done with a Möbius transformation
because the points $x=\pm\infty$ are regarded as equals in the extended
complex plane $\mathbb{C}_{\infty}$ and the Möbius transformation
is one-to-one and onto in $\mathbb{C}_{\infty}$. Thus, to work this
around, we shall proceed in a somewhat different way: the idea is
to first find the Möbius transformation $W(x)$ that maps the interval
$I=(-1,1)$ into the interval $J=(-\xi,\xi)$ for some $\xi>0$ and,
then, to take the limit $\xi\rightarrow\infty$. Imposing further
that the point $0$ is mapped to $0$, we find that the required Möbius
transformation becomes $W(x)=a\xi$, whose inverse is $M(x)=a/\xi$
($a$ is still arbitrary). 

Now, according (\ref{omega}), (\ref{OCQ}) or (\ref{PearsonQ}),
we get that Jacobi's weight function $w(x)=\left(1-x\right)^{\alpha}\left(1+x\right)^{\beta}$
will be transformed into 
\[
\omega(x)=\frac{1}{\xi}\left(1-\frac{x}{\xi}\right)^{\alpha}\left(1+\frac{x}{\xi}\right)^{\beta}\kappa(\alpha,\beta),
\]
where $\kappa(\alpha,\beta)$ is the constant of integration. From
this we can plainly see that Hermite's weight function $\omega(x)=\mathrm{e}^{-x^{2}}$
is achieved by making $\beta=\alpha$, $\kappa(\alpha,\beta)=\xi=\sqrt{\alpha}$
and taking limit $\alpha\rightarrow\infty$ (so that we get $\xi\rightarrow\infty$
as well). With these identifications, the Möbius-transformed polynomials
become $\mathcal{J}_{n}^{(\alpha)}(x)=\left(a^{n}\sqrt{\alpha}\right)^{n}J_{n}^{(\alpha,\alpha)}\left(x/\sqrt{\alpha}\right)$.
Now, the limits of $\mathcal{J}_{n}^{(\alpha)}(x)$ for $\alpha\rightarrow\infty$
only exist if $a$ is inversely proportional to $\alpha$. In fact,
if we set $a=2/\alpha$ and take the limit $\alpha\rightarrow\infty$
then we can verify that we get a complete correspondence between the
Möbius-transformed polynomials $\mathcal{J}_{n}^{(\alpha)}(x)$ and
Hermite polynomials $H_{n}(x)$: 
\begin{equation}
\frac{1}{n!}H_{n}(x)=\lim_{\alpha\rightarrow\infty}\mathcal{J}_{n}^{(\alpha)}(x)=\lim_{\alpha\rightarrow\infty}\left(\frac{2}{\sqrt{\alpha}}\right)^{n}J_{n}^{(\alpha,\alpha)}\left(\frac{x}{\sqrt{\alpha}}\right).
\end{equation}

\section*{Acknowledgments}

We kindly thank Professors C. F. Bracciali and A. Sri Ranga for the
discussions we had about these ideas and also for their valuable comments.
The work of RSV was supported by grants from Coordination for the
Improvement of Higher Education Personnel (CAPES). The work of VB
was supported by funds from São Paulo Research Foundation (FAPESP),
grant \#2016/02700-8. 

\section*{Bibliography}

\bibliographystyle{elsarticle-num}
\bibliography{MobiusP}

\begin{thebibliography}{10}
\expandafter\ifx\csname url\endcsname\relax
  \def\url#1{\texttt{#1}}\fi
\expandafter\ifx\csname urlprefix\endcsname\relax\def\urlprefix{URL }\fi
\expandafter\ifx\csname href\endcsname\relax
  \def\href#1#2{#2} \def\path#1{#1}\fi

\bibitem{KoekoekEtal2010}
R.~Koekoek, P.~A. Lesky, R.~F. Swarttouw, Hypergeometric orthogonal polynomials
  and their $q$-analogues, Springer Science \& Business Media, 2010 (2010).

\bibitem{Simon2005A}
B.~Simon, Orthogonal polynomials on the unit circle, {Part I}: Classical
  theory, American Mathematical Society, 2004 (2004).

\bibitem{Simon2005B}
B.~Simon, Orthogonal polynomials on the unit circle, {Part} II: {Spectral}
  Theory, American Mathematical Society, 2004 (2004).

\bibitem{Chihara1957}
T.~Chihara, On quasi-orthogonal polynomials, Proceedings of the American
  mathematical Society 8~(4) (1957) 765--767 (1957).
\newblock \href {https://doi.org/10.2307/2033295} {\path{doi:10.2307/2033295}}.

\bibitem{Totik1998}
V.~Totik, Orthogonal polynomials with respect to varying weights, Journal of
  Computational and Applied Mathematics 99~(1) (1998) 373--385 (1998).
\newblock \href {https://doi.org/10.1016/S0377-0427(98)00171-X}
  {\path{doi:10.1016/S0377-0427(98)00171-X}}.

\bibitem{Gomez2009}
D.~G{\'o}mez-Ullate, N.~Kamran, R.~Milson, An extended class of orthogonal
  polynomials defined by a {Sturm-Liouville} problem, Journal of Mathematical
  Analysis and Applications 359~(1) (2009) 352--367 (2009).
\newblock \href {https://doi.org/10.1016/j.jmaa.2009.05.052}
  {\path{doi:10.1016/j.jmaa.2009.05.052}}.

\bibitem{Gomez2010}
D.~G{\'o}mez-Ullate, N.~Kamran, R.~Milson, An extension of {Bochner's} problem:
  exceptional invariant subspaces, Journal of Approximation Theory 162~(5)
  (2010) 987--1006 (2010).
\newblock \href {https://doi.org/10.1016/j.jat.2009.11.002}
  {\path{doi:10.1016/j.jat.2009.11.002}}.

\bibitem{Jones1989}
W.~B. Jones, O.~Nj{\aa}stad, W.~Thron, Moment theory, orthogonal polynomials,
  quadrature, and continued fractions associated with the unit circle, Bulletin
  of the London Mathematical Society 21~(2) (1989) 113--152 (1989).
\newblock \href {https://doi.org/10.1112/blms/21.2.113}
  {\path{doi:10.1112/blms/21.2.113}}.

\bibitem{Akritas1989}
A.~G. Akritas, Elements of computer algebra with applications, Vol.~3, Wiley
  New York, 1989 (1989).

\bibitem{Vieira2019}
R.~Vieira, How to count the number of zeros that a polynomial has on the unit
  circle?, arXiv preprint arXiv:1902.04231 (2019).

\bibitem{Vieira2019B}
R.~Vieira, Polynomials with symmetric zeros, in: Polynomials -- Theory and
  Application, IntechOpen, 2019 (2019).
\newblock \href {https://doi.org/10.5772/intechopen.82728}
  {\path{doi:10.5772/intechopen.82728}}.

\bibitem{Aldaya1991}
V.~Aldaya, J.~Bisquert, J.~Navarro-Salas, The quantum relativistic harmonic
  oscillator: generalized {Hermite} polynomials, Physics Letters A 156~(7-8)
  (1991) 381--385 (1991).
\newblock \href {https://doi.org/10.1016/0375-9601(91)90711-G}
  {\path{doi:10.1016/0375-9601(91)90711-G}}.

\bibitem{Natalini1996}
P.~Natalini, The relativistic {Laguerre} polynomials, Rend. Matematica, Ser.
  VII 16 (1996) 299--313 (1996).

\bibitem{HeNatalini1999}
M.~He, P.~Natalini, Relativistic {Jacobi} polynomials, Integral Transforms and
  Special Functions 8~(1-2) (1999) 43--56 (1999).
\newblock \href {https://doi.org/10.1080/10652469908819215}
  {\path{doi:10.1080/10652469908819215}}.

\bibitem{Ismail1996}
M.~E. Ismail, Relativistic orthogonal polynomials are {Jacobi} polynomials,
  Journal of Physics A: Mathematical and General 29~(12) (1996) 3199 (1996).
\newblock \href {https://doi.org/10.1088/0305-4470/29/12/023}
  {\path{doi:10.1088/0305-4470/29/12/023}}.

\bibitem{Flanigan1972}
F.~J. Flanigan, Complex variables: harmonic and analytic functions, Dover
  Publications, 1972 (1972).

\bibitem{Ahlfors1979}
L.~V. Ahlfors, Complex Analysis: An Introduction to the Theory of Analytic
  Functions of One Complex Variable, 3rd Edition, McGraw-Hill Education, 1979
  (1979).

\bibitem{Szego1939}
G.~Szeg{\H o}, Orthogonal polynomials, Vol.~23, American Mathematical Society,
  1939 (1939).

\bibitem{Chihara2011}
T.~S. Chihara, An introduction to orthogonal polynomials, Dover Publications,
  2011 (2011).

\bibitem{Dehesa2001}
J.~S. Dehesa, A.~Mart{\'i}nez-Finkelshtein, J.~S{\'a}nchez-Ruiz, Quantum
  information entropies and orthogonal polynomials, Journal of Computational
  and Applied Mathematics 133~(1-2) (2001) 23--46 (2001).
\newblock \href {https://doi.org/10.1016/S0377-0427(00)00633-6}
  {\path{doi:10.1016/S0377-0427(00)00633-6}}.

\bibitem{Routh1885}
E.~J. Routh, On some properties of certain solutions of a differential equation
  of the second order, Proceedings of the London Mathematical Society s1-16~(1)
  (1885) 245--262 (1885).
\newblock \href {https://doi.org/10.1112/plms/s1-16.1.245}
  {\path{doi:10.1112/plms/s1-16.1.245}}.

\bibitem{Bochner1929}
S.~Bochner, {{\"U}ber Sturm-Liouvillesche Polynomsysteme}, Mathematische
  Zeitschrift 29~(1) (1929) 730--736 (1929).
\newblock \href {https://doi.org/10.1007/BF01180560}
  {\path{doi:10.1007/BF01180560}}.

\bibitem{Brenke1930}
W.~C. Brenke, On polynomial solutions of a class of linear differential
  equations of the second order, Bulletin of the American Mathematical Society
  36 (1930) 77--84 (1930).
\newblock \href
  {https://doi.org/https://doi.org/10.1090/S0002-9904-1930-04888-0}
  {\path{doi:https://doi.org/10.1090/S0002-9904-1930-04888-0}}.

\bibitem{KrallFrink1949}
H.~L. Krall, O.~Frink, A new class of orthogonal polynomials: The {Bessel}
  polynomials, Transactions of the American Mathematical Society 65~(1) (1949)
  100--115 (1949).
\newblock \href {https://doi.org/10.2307/1990516} {\path{doi:10.2307/1990516}}.

\bibitem{Romanovski1929}
V.~Romanovski, Sur quelques classes nouvelles de polynomes orthogonaux, CR
  Acad. Sci. Paris 188~(1023) (1929) 1023--1025 (1929).

\bibitem{Alvarez2007}
D.~Alvarez-Castillo, M.~Kirchbach, Exact spectrum and wave functions of the
  hyperbolic {Scarf} potential in terms of finite {Romanovski} polynomials,
  Revista mexicana de f{\'\i}sica E 53~(2) (2007) 143--154 (2007).

\bibitem{Raposo2007}
A.~Raposo, H.~Weber, D.~Alvarez-Castillo, M.~Kirchbach, Romanovski polynomials
  in selected physics problems, Open Physics 5~(3) (2007) 253--284 (2007).
\newblock \href {https://doi.org/10.2478/s11534-007-0018-5}
  {\path{doi:10.2478/s11534-007-0018-5}}.

\end{thebibliography}

\end{document}